\documentclass[11pt]{article}

\usepackage{amssymb}
\usepackage{amsmath}
\usepackage{theorem}
\usepackage{epsfig}
\usepackage{verbatim}
\usepackage{graphicx}
\usepackage{subfigure}

\usepackage{color}

\textwidth 155mm \evensidemargin 0.5cm \oddsidemargin 0.5cm
\textheight 21cm 

\newtheorem{theorem}{Theorem}[section]
\newtheorem{lemma}[theorem]{Lemma}

\newtheorem{prop}[theorem]{Proposition}
\newtheorem{corollary}[theorem]{Corollary}

\newtheorem{defi}[theorem]{Definition}
\newtheorem{rem}[theorem]{Remark}

\newcommand{\R}{\Bbb{R}}
\newcommand{\C}{\Bbb{C}}

\newcommand{\T}{\mathbb{T}}
\newcommand{\F}{\mathcal{F}}

\newcommand{\grad}{\nabla}

\newcommand{\dpt}{\partial_t}
\newcommand{\da}{\partial_{\alpha}}

\newcommand{\la}{\Lambda}

\newcommand{\al}{\alpha}
\newcommand{\ep}{\varepsilon}

\newcommand{\ztil}{\tilde{z}}
\newcommand{\dpa}{\partial^{\bot}_{\alpha}}

\newcommand{\pa}{\partial}

\newcommand{\vp}{\varphi}
\newcommand{\om}{\omega}

\newcommand{\be}{\beta}

\newcommand{\co}{\phi_{\delta} * \phi_{\delta} *}

\newenvironment{proof}{\begin{trivlist} \item[] {\em Proof:}}{\hfill $\Box$
                       \end{trivlist}}

\hyphenation{vi-ce-ver-sa}

\makeatletter
\renewcommand*\l@section{\@dottedtocline{1}{0em}{1.5em}}
\renewcommand*\l@subsection{\@dottedtocline{2}{1.5em}{2.3em}}
\renewcommand*\l@subsubsection{\@dottedtocline{3}{3.8em}{3.7em}}
\makeatother

\numberwithin{equation}{section}

\begin{document}

\title{Finite time singularities for water waves with surface tension}
\date{\today}
\author{ Angel Castro, Diego C\'ordoba, Charles Fefferman, \\ Francisco Gancedo and Javier G\'omez-Serrano \\ \\
\textit{Dedicated to Peter Constantin on his 60th Birthday}}

\maketitle


\begin{abstract}

Here we consider the 2D free boundary incompressible Euler equation with surface tension. We prove that the surface tension does not prevent a finite time splash or splat singularity,  i.e. that the curve touches itself either in a point or along an arc. To do so, the main ingredients of the proof are a transformation to desingularize the curve and a priori energy estimates.

\vskip 0.3cm
\textit{Keywords: Euler, incompressible, blow-up, water waves, splash, splat, surface tension.}

\end{abstract}




\section{Introduction}
In this paper we continue the work in \cite{Castro-Cordoba-Fefferman-Gancedo-GomezSerrano:finite-time-singularities-Euler} and \cite{Castro-Cordoba-Fefferman-Gancedo-GomezSerrano:splash-water-waves} where we show the formation of singularities for the free boundary incompressible Euler equations. Here we prove that in two space dimensions the free boundary problem develops finite time ``splash" and ``splat" singularities when surface tension is taken into account (see below, in Section \ref{SectionInitialData}, the precise definition of the splash and splat curves).

In order to describe the evolution of a fluid with a moving domain $\Omega(t)\subset\R^2$, the 2D incompressible Euler equations are used:
\begin{equation}\label{Euler}
(v_t+v\cdot\grad v)(x,y,t)=-\grad p(x,y,t)-(0,1),\quad (x,y)\in\Omega(t)
\end{equation}
with the fluid velocity $v(x,y,t)\in\R^2$ and the pressure $p(x,y,t)\in\R$. The vector $-(0,1)$ represents the external gravitational force (the acceleration due to gravity is taken equal to one for the sake of simplicity).
The free boundary
\begin{equation}\label{Parametriza}
\partial\Omega(t)=\{z(\alpha,t)=(z_1(\alpha,t),z_2(\alpha,t)):\alpha\in\R\}
\end{equation}
is smooth and convected by the velocity field
\begin{equation}\label{VeloFrontera}
z_t(\alpha,t)\cdot z_\al^\bot(\alpha,t)=v(z(\alpha,t),t)\cdot z_\al^\bot(\alpha,t),
\end{equation}
which is assumed to be incompressible and irrotational
\begin{equation}\label{IncomIrro}
\grad\cdot v(x,y,t)=0,\quad \grad^{\bot}\cdot v(x,y,t)=0, \qquad (x,y)\in\Omega(t).
\end{equation}
Here we study the relevance of considering the Laplace-Young condition for which the pressure on the interface $\partial\Omega(t)$ is proportional to its curvature, meaning that the surface tension effect is considered:
\begin{equation}\label{L-Y}
-p(z(\al,t),t)=\frac{\tau}{2} \frac{z_{\al \al}(\al,t)\cdot z_\al^\bot(\al,t)}{|z_\al(\al,t)|^3} \equiv \frac{\tau}{2}K.
\end{equation}
Above $\tau>0$ is the surface tension coefficient.

The results in this paper can be shown for three different scenarios:
\begin{enumerate}
\item $\Omega(t)$ a compact domain: $z(\al,t)$ is a $2\pi$-periodic function in $\alpha$.
\item Asymptotically flat case: $z(\al,t)-(\al,0)\to 0$ as $\alpha\to\infty$.
\item $\Omega(t)$ periodic in the horizontal variable: $z(\al,t)-(\al,0)$ is a
$2\pi$-periodic function in $\alpha$.
\end{enumerate}

The problem to study here is the potential formation of singularities for the system (\ref{Euler}-\ref{L-Y}) with smooth interface and smooth velocity field with finite energy as initial data:
\begin{align}\label{VeloInicial}
\begin{split}
\Omega(0)&=\Omega_0,\quad \partial\Omega_0=\{z_0(\alpha):\alpha\in\R\},\\
v(x,y,0)&=v_0(x,y),\quad \int_{\Omega_0}|v_0(x,y)|^2dxdy< +\infty.
\end{split}
\end{align}
The smooth initial curve $z_0(\al)$ must satisfy the arc-chord condition:
\begin{equation}\label{arcchord}
 |z_0(\al)-z_0(\beta)|\geq c_{AC}|\al-\beta|,\quad \mbox{for all }\al,\,\beta\in\R,
\end{equation}
where $c_{AC}>0$ is the arc-chord constant.
 The study of this quantity has been employed by other authors to prove local existence (see for example \cite{Wu:well-posedness-water-waves-2d}, \cite{Wu:well-posedness-water-waves-3d}). We will quantify how our curve $z(\al)$ satisfies the arc-chord condition through the following quantity

$$
\F(z)=\frac{|\beta|}{|z(\al)-z(\al-\beta)|},\quad \al,\beta\in [-\pi,\pi].
$$

  Throughout the paper we will only focus on scenario 3 for the sake of simplicity. From now on, we will denote $\Omega_0\cap[-\pi,\pi]\times\R$ by $\Omega_0$ by abuse of notation (a fundamental domain in the period).

We establish the main result in the paper for the system (\ref{Euler}-\ref{L-Y}).
\begin{theorem}
Consider $z_0(\al)-(\al,0)\in H^k(\T)$ for $k\geq 5$. Then there exist a family of initial data satisfying \eqref{VeloInicial} and the arc-chord condition \eqref{arcchord} and a time $T_s>0$ such that the interface $z(\al,t) \in H^{k}(\T)$ from the unique smooth solution of the system (\ref{Euler}-\ref{arcchord}) on the time interval $[0,T_s]$ touches itself at a single point (``splash" singularity) or along an arc (``splat" singularity) at time $t=T_s$.
\end{theorem}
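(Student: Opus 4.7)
The plan is to adapt the desingularizing-transformation strategy of the authors' earlier works \cite{Castro-Cordoba-Fefferman-Gancedo-GomezSerrano:finite-time-singularities-Euler} and \cite{Castro-Cordoba-Fefferman-Gancedo-GomezSerrano:splash-water-waves} to the present setting with surface tension. The general scheme has four stages: first, prescribe a splash or splat profile at the intended singular time $t=T_s$; second, transform the problem via a conformal square-root map $P$ that separates the two colliding branches, so that the image curve is regular; third, establish local well-posedness, both forward and backward in time, of the transformed system with the modified Laplace--Young condition; and fourth, solve backward from $T_s$ on a short interval to extract smooth initial data whose forward evolution produces the singularity.

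For the first two stages, I would choose a smooth $\ztil_s(\al)$ that either touches itself at a single point or along an arc while satisfying the arc-chord condition away from the contact set, together with a velocity whose normal component on $\partial\Omega(T_s)$ instantaneously separates the colliding branches. Fix a branch of $P$ on a neighborhood of $\Omega(T_s)$ with $P'$ nonvanishing away from the singular point, and introduce tilde-variables $\ztil = P\circ z$, $\widetilde v = (\nabla P\circ z)\, v$, and a correspondingly transformed pressure. The incompressible, irrotational Euler system (\ref{Euler})--(\ref{IncomIrro}) pulls back to a modified evolution on $\widetilde\Omega(t) = P(\Omega(t))$ carrying explicit factors of $|P'|^{-2}$, and the surface tension condition (\ref{L-Y}) becomes a boundary relation in which the original curvature $K$ is rewritten in terms of $\ztil_{\al\al}$, $\ztil_\al$, and derivatives of $P$ along $\ztil$.

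The core step is the energy estimate for this transformed system in $H^k(\T)$, which must simultaneously control $\ztil$, $\widetilde v$, and the arc-chord quantity $\F(\ztil)$. Testing the evolution against an appropriately weighted $\pa_\al^{k+1}\ztil$, the top-order contribution of the transformed Laplace--Young condition should yield a favorable sign controlling high-order derivatives of $\ztil$ --- the usual regularization provided by surface tension, here pulled back through $P$. The remaining terms, including commutators with $P$, the Birkhoff--Rott-like singular integrals defining $\widetilde v$ from $\ztil$, and lower-order curvature contributions, are then estimated in $H^k$ in terms of $\F(\ztil)$ and absorbed via Gronwall. The main obstacle is precisely to verify that the favorable sign of the top-order surface tension term survives the conformal pullback and the splash geometry, where $P'$ vanishes at the singular point; this forces a careful choice of norm and multiplier, and is where the argument genuinely differs from the setting without surface tension. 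Because gravity and surface tension are both conservative, the same estimate also gives backward-in-time solvability on the same interval.

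Finally, running the transformed system backward from $(\ztil_s,\widetilde v_s)$ on $[T_s-\ep,T_s]$, by continuity of $\F(\ztil(\cdot,t))$ the pulled-back curve $z(\al,t) = P^{-1}(\ztil(\al,t))$ is smooth, satisfies the arc-chord condition (\ref{arcchord}), and solves (\ref{Euler})--(\ref{L-Y}) for every $t\in[T_s-\ep,T_s)$, while converging to the prescribed splash or splat configuration as $t\to T_s^-$. Taking the initial time to be $0 \equiv T_s-\ep$ yields the desired family of smooth, finite-energy initial data in $H^k(\T)$ whose unique solution develops a splash or splat singularity at $t=T_s$, completing the proof.
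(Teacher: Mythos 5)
Your proposal follows the same high-level strategy as the paper: prescribe a splash/splat profile at the final time, conjugate by the conformal map $P(w)=(\tan(w/2))^{1/2}$ so that the transformed curve satisfies the arc-chord condition, prove a local well-posedness theorem for the transformed contour system with the pulled-back Laplace--Young condition, and use time-reversibility to extract regular initial data whose forward evolution reaches the prescribed singular configuration. This is exactly the scheme the authors implement in Sections III--V.

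However, the part you describe in one sentence --- ``Testing the evolution against an appropriately weighted $\pa_\al^{k+1}\ztil$, the top-order contribution of the transformed Laplace--Young condition should yield a favorable sign'' --- is the entire technical content of the paper, and as stated it would not close. The energy that actually works is not built on $\pa_\al^{k+1}\ztil$: the paper first fixes the tangential velocity $\tilde c$ so that $|\ztil_\al|$ is spatially constant (the Hou--Lowengrub--Shelley/Ambrose reparametrization), rewrites the curvature $K$ in tilde variables as $Q\tilde K + M$ and isolates the lower-order part $M$, and then constructs an energy consisting of $Q$-weighted $L^2$ and $\dot H^{1/2}$ norms of $\pa_\al^k\tilde K$ together with $\pa_\al^k\tilde\omega$ (Section IV) or $\pa_\al^k\tilde\vp$ with the Rayleigh--Taylor weight $\sigma$ (Section V), plus the arc-chord control $\|\F(\ztil)\|_{L^\infty}$ and the reciprocal distances $1/m(q^l)$ to the branch points of $P$. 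The choice of the exponents on $Q$ and the pairing with the curvature variable rather than with $\ztil$ are precisely what produces the collection of high-order and low-order cancellations ($A^2 + B^{2,2,1}$, etc.) that the paper verifies term by term, together with a three-parameter mollification to justify passage to the limit; none of this is captured by a generic Gronwall statement. Finally, your worry that ``$P'$ vanishes at the singular point'' is misplaced: by construction $P'$ is nonvanishing on the water region, including at the splash point. The genuine constraint is instead that the curve $\ztil$ must avoid the finitely many branch points $q^l$, which is why $\sum_l 1/m(q^l)$ appears in the energy --- a term your proposal omits entirely.
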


These solutions can be extended to the periodic $3D$ setting considering scenarios invariant under translations in one coordinate direction. In \cite{Coutand-Shkoller:finite-time-splash}, Coutand-Shkoller consider additional $3D$ splash and splat singularities. The case with small initial data was treated by Wu in the two dimensional case \cite{Wu:almost-global-wellposedness-2d} and the three dimensional case was studied by Wu \cite{Wu:global-wellposedness-3d} and Germain et al. \cite{Germain-Masmoudi-Shatah:global-solutions-gravity-water-waves-annals}.

For other long time behaviour results see Alvarez-Lannes \cite{AlvarezSamaniego-Lannes:large-time-existence-water-waves}, Castro et al. \cite{Castro-Cordoba-Fefferman-Gancedo-LopezFernandez:rayleigh-taylor-breakdown} and the references therein.

In order to prove this theorem we proceed as in \cite{Castro-Cordoba-Fefferman-Gancedo-GomezSerrano:finite-time-singularities-Euler} and \cite{Castro-Cordoba-Fefferman-Gancedo-GomezSerrano:splash-water-waves}. Using \eqref{IncomIrro} it is easy to declare that $v$ is harmonic in $\Omega(t)$. This fact allows us to introduce the moment $\omega(\al,t)$ by elementary potential theory as follows:
\begin{equation}
 v(x,y,t)=\frac{PV}{2\pi}\int_{\R}\frac{(x-z_1(\beta,t),y-z_2(\beta,t)))^{\bot}}{|(x,y)-z(\beta,t)|^2}\omega(\beta,t)d\beta,
\end{equation}
 where PV denotes principal value at infinity. This moment is also known in the literature as the vorticity amplitude.
Then the system  (\ref{Euler}-\ref{L-Y}) is equivalent to the following evolution equations which are only written in terms of the free boundary $z(\al,t)$ and the amplitude $\omega(\al,t)$:
\begin{equation}\label{em}
z_t(\al,t)=BR(z,\omega)(\al,t)+c(\al,t)z_{\al}(\al,t),
\end{equation}
\begin{align}
\begin{split}\label{cEuler}
\omega_t(\al,t)&=-2BR_t(z,\omega)(\al,t)\cdot
z_{\al}(\al,t)-\Big(\frac{\omega^2}{4|\da z|^2}\Big)_{\al}(\al,t) +(c\omega)_{\al}(\al,t)\\
&\quad+2c(\al,t) BR_{\al}(z,\omega)(\al,t)\cdot z_{\al}(\al,t)-2
(z_2)_\al(\al,t)+\tau \left(\frac{z_{\al \al}(\al,t)\cdot z_\al^\bot(\al,t)}{|z_\al(\al,t)|^3}\right)_{\al}
\end{split}
\end{align}
(for details see for example \cite[Section 2]{Cordoba-Cordoba-Gancedo:interface-water-waves-2d}).
Above $BR(z,\omega)$ is the Birkhoff-Rott integral defined by
\begin{equation}\label{BR}
BR(z,\omega)=\frac{1}{2\pi}PV\int_{\R}\frac{(z(\al,t)-z(\beta,t))^{\bot}}{|z(\al,t)-z(\beta,t)|^2}\omega(\beta,t)d\beta,
\end{equation}
and $c(\al,t)$ is arbitrary since the boundary is convected by the normal velocity \eqref{VeloFrontera}.

Local existence in Sobolev spaces was first achieved by Wu \cite{Wu:well-posedness-water-waves-2d} assuming  initially the arc-chord condition. For other variations and results see \cite{Craig:existence-theory-water-waves,Nalimov:cauchy-poisson,Beale-Hou-Lowengrub:growth-rates-linearized,Yosihara:gravity-waves,Wu:well-posedness-water-waves-3d,Christodoulou-Lindblad:motion-free-surface,Lindblad:well-posedness-motion,Coutand-Shkoller:well-posedness-free-surface-incompressible,Shatah-Zeng:geometry-priori-estimates,Zhang-Zhang:free-boundary-3d-euler,Lannes:well-posedness-water-waves,Ambrose-Masmoudi:zero-surface-tension-3d-waterwaves,Ambrose:well-posedness-vortex-sheet,Lannes:stability-criterion,Alazard-Burq-Zuily:water-wave-surface-tension,Alazard-Metivier:paralinearization,Cordoba-Cordoba-Gancedo:interface-water-waves-2d}.

The strategy of the proof of the main result is to establish a local existence theorem from the initial data that has a splash or a splat singularity (notice that the equations are time reversible invariant).
Since the curve self-intersects (failure of the arc-chord condition), it is not clear if the amplitude of the vorticity remains smooth and the meaning of equations (\ref{em}-\ref{cEuler}). In order to deal with these obstacles we use a conformal map
 $$
 P(w)=\Big(\tan\Big(\frac{w}{2}\Big)\Big)^{1/2},\quad w\in\C,
 $$
whose intention is to keep apart the self-intersecting points taking the branch of the square root above passing through those crucial points.  Here $P(z)$ will refer to a 2 dimensional vector whose components are the real and imaginary parts of $P(z_1 + iz_2)$. We also make sure that $\Omega(t)\cup\partial\Omega(t) $ do not contain any singular point of the transformation $P$. Then potential theory helps us to get the following analogous evolution equations for the new curve
$$
\ztil(\al,t)=P(z(\al,t))
$$
and the new amplitude $\tilde{\omega}$:
\begin{align}\label{zeq}
\tilde{z}_{t}(\al,t) & = Q^2(\al,t)BR(\tilde{z},\tilde{\omega})(\al,t) + \tilde{c}(\al,t)\tilde{z}_{\al}(\al,t),\end{align}
\begin{align}\label{eqomega}
\tilde{\omega}_{t}(\al,t)  =& -2 BR_t(\tilde{z},\tilde{\omega})(\al,t) \cdot \tilde{z}_{\al}(\al,t) - |BR(\tilde{z},\tilde{\omega})|^{2} (Q^{2})_{\al}(\al,t) - \Big(\frac{Q^2(\al,t)\tilde{\omega}(\al,t)^2}{4|\tilde{z}_{\al}(\al,t)|^{2}}\Big)_{\al}\nonumber \\
& + 2\tilde{c}(\al,t) BR_\al(\tilde{z},\tilde{\omega}) \cdot \tilde{z}_{\al}(\al,t) + \left(\tilde{c}(\al,t)\tilde{\omega}(\al,t)\right)_{\al}
- 2 \left(P^{-1}_2(\tilde{z}(\al,t))\right)_{\al} \nonumber \\
&+\tau\left(\frac{Q^{3}}{|\tilde{z}_{\al}(\al,t)|^{3}}(\tilde{z}_{\al}^{T}HP_{2}^{-1} \tilde{z}_{\al} \nabla P_{1}^{-1} \cdot \tilde{z}_{\al}
- \tilde{z}_{\al}^{T}HP_{1}^{-1} \tilde{z}_{\al} \nabla P_{2}^{-1} \cdot \tilde{z}_{\al})\right)_{\al} \nonumber\\
&+\tau\left(Q\frac{\tilde{z}_{\al\al}(\al,t)\cdot\tilde{z}_{\al}^{\bot}(\al,t)}{|\tilde{z}_{\al}(\al,t)|^3}\right)_{\al} \end{align}
where
$$Q^2(\al,t) = \left|\frac{dP}{dw}(P^{-1}(\tilde{z}(\al,t)))\right|^{2},$$

and $HP_{i}^{-1}$ denotes the Hessian matrix of $P_{i}^{-1}$, which is the $i$-th ($i = \{1,2\}$) component of the transformation $P^{-1}$.

Here, we choose $\tilde{c}(\al,t)$ in such a way that $|\tilde{z}_{\al}(\al,t)| = A(t)$.
 This particular choice of $\tilde{c}$ was first introduced by Hou et al. in \cite{Hou-Lowengrub-Shelley:removing-stiffness} and was later used by Ambrose \cite{Ambrose:well-posedness-vortex-sheet} and Ambrose-Masmoudi \cite{Ambrose-Masmoudi:zero-surface-tension-2d-waterwaves}.  The choice of $\tilde{c}$ implies

\begin{align*}\tilde{c}(\al,t) & = \frac{\al+\pi}{2\pi}\int_{-\pi}^{\pi}(Q^{2}BR(\tilde{z},\tilde{\omega}))_\beta(\beta,t)\cdot\frac{\tilde{z}_{\beta}(\beta,t)}{|\tilde{z}_{\beta}(\beta,t)|^{2}}d\beta \\
& - \int_{-\pi}^{\al}(Q^{2}BR(\tilde{z},\tilde{\omega}))_\beta(\beta,t)\cdot\frac{\tilde{z}_{\beta}(\beta,t)}{|\tilde{z}_{\beta}(\beta,t)|^{2}}d\beta
\end{align*}

It is easy to check that if we take $Q \equiv 1$ in (\ref{zeq}-\ref{eqomega}) we recover (\ref{em}-\ref{cEuler}).

We also define  the function
\begin{align}\label{varphi}
\tilde{\varphi}(\alpha,t)=\frac{Q^2(\alpha,t)\tilde{\omega}(\alpha,t)}{2| \tilde{z}_\alpha(\alpha,t)|}-\tilde{c}(\alpha,t)| \tilde{z}_\alpha(\alpha,t)|
\end{align}
introduced by Beale et al. for the linear case \cite{Beale-Hou-Lowengrub:growth-rates-linearized} and by Ambrose-Masmoudi for the nonlinear one \cite{Ambrose-Masmoudi:zero-surface-tension-2d-waterwaves}. This function will be used to prove local existence in Sobolev spaces.

In the sections below, we show a local existence theorem based on energy estimates. Section \ref{SectionInitialData} is devoted to provide the appropriate initial data for the splash and splat singularities. In Section \ref{SectionEnergyWRT} we choose an energy which does not need a precise sign on the Rayleigh-Taylor function. In Section \ref{SectionEnergyRT} we choose a different energy that involves the sign of the Rayleigh-Taylor function and the estimates are uniform with respect to the surface tension coefficient. These two energies are based on the ones obtained in the non-tilde domain by Ambrose (\cite{Ambrose:well-posedness-vortex-sheet}) and Ambrose-Masmoudi (\cite{Ambrose-Masmoudi:zero-surface-tension-3d-waterwaves}).

The Rayleigh-Taylor function is given by the following formula
\begin{align}
\begin{split}\label{R-T}
 \sigma  \equiv& \left(BR_{t}(\tilde{z},\tilde{\omega}) + \frac{\tilde{\varphi}}{|\tilde{z}_{\al}|}BR_{\al}(\tilde{z},\tilde{\omega})\right) \cdot \tilde{z}_{\al}^{\perp} + \frac{\tilde{\omega}}{2|\tilde{z}_{\al}|^{2}}\left(\tilde{z}_{\al t} + \frac{\tilde{\varphi}}{|\tilde{z}_{\al}|}\tilde{z}_{\al \al}\right) \cdot \tilde{z}_{\al}^{\perp} \\
& + Q\left|BR(\tilde{z},\tilde{\omega}) + \frac{\tilde{\omega}}{2|\tilde{z}_{\al}|^{2}}\tilde{z}_{\al}\right|^{2}(\nabla Q)(\tilde{z}) \cdot \tilde{z}_{\al}^{\perp}
 + (\nabla P_{2}^{-1})(\tilde{z}) \cdot \tilde{z}_{\al}^{\perp}.
\end{split}
\end{align}

All solutions that we will consider throughout the paper will have finite energy, as discussed in \cite{Castro-Cordoba-Fefferman-Gancedo-GomezSerrano:finite-time-singularities-Euler}. The system satisfies the conservation of the mechanical energy. We define it this way: (not to be confused with the subsequent definitions of some other energies, see sections \ref{SectionEnergyWRT} and \ref{SectionEnergyRT}).

\begin{align*}
\mathcal{E}_S(t) & = \frac{1}{2}\int_{\Omega_f(t)}|v(x,y,t)|^2dxdy + \frac{1}{2}\int_{-\pi}^{\pi}(z_2(\alpha,t))^2\da z_{1}(\alpha,t) d\alpha
 + \frac{\tau}{2} \int_{-\pi}^{\pi} |\da z(\al,t)|d\alpha \\
 &  \equiv \mathcal{E}_k(t) + \mathcal{E}_p(t) + \mathcal{E}_\tau(t),
\end{align*}

where $z(\alpha,t) = (z_{1}(\alpha,t), z_{2}(\alpha,t)), u(\alpha,t) = v(z(\alpha,t),t)$, and $\Omega_f(t)=\Omega(t) \cap [-\pi,\pi] \times \mathbb{R}$ is a fundamental domain in the water region in a period, then it follows that the energy is conserved.

\begin{align}
\frac{d\mathcal{E}_k(t)}{dt} & = \int_{\Omega_f(t)}v(x,y,t)(v_t(x,y,t) + v(x,y,t)\cdot \nabla v(x,y,t))dxdy \nonumber \\
& = \int_{\Omega_f(t)}v(x,y,t)(-\nabla p(x,y,t) - (0,1))dxdy \nonumber \\
& = -\int_{\Omega_f(t)}v(x,y,t)(\nabla (p(x,y,t) + y))dxdy \nonumber \\
& = - \int_{\partial (\Omega_f(t))}v(x,y,t)\cdot \overrightarrow{n} y ds
 + \int_{\partial (\Omega_f(t))}v(x,y,t)\cdot \overrightarrow{n} \frac{\tau}{2} K ds\nonumber \\
& = -\int_{-\pi}^{\pi}z_{2}(\alpha,t)u(\alpha,t)\cdot \da z^\bot(\alpha,t)d\alpha
  +\frac{\tau}{2} \int_{-\pi}^{\pi}u(\alpha,t)\cdot \da z^\bot(\alpha,t)  \frac{\da^{2} z(\al,t) \cdot \da z^{\perp}(\al,t)}{|\da z(\al,t)|^{3}} d\alpha
\end{align}
where we have used the incompressibility of the fluid ($\nabla \cdot v = 0$) and Laplace-Young's condition for the pressure on the interface. Next
\begin{align}
\frac{d\mathcal{E}_p(t)}{dt} & = \int_{-\pi}^{\pi}z_2(\alpha,t)\partial_t z_2(\alpha,t)\da z_{1}(\alpha,t)d\alpha + \frac{1}{2}\int_{-\pi}^{\pi}(z_{2}(\alpha,t))^2
\dpt \da  z_{1}(\alpha,t)d\alpha \nonumber \\
& = \int_{-\pi}^{\pi}z_2(\alpha,t)\partial_t z_2(\alpha,t)\da z_{1}(\alpha,t)d\alpha - \int_{-\pi}^{\pi}z_{2}(\alpha,t)\da z_{2}(\alpha,t)\partial_t z_1(\alpha,t)d\alpha \nonumber \\
& = \int_{-\pi}^{\pi}z_2(\alpha,t)u(\alpha,t)\cdot \da z^\bot(\alpha,t)d\alpha.
\end{align}

\begin{align}
\frac{d\mathcal{E}_\tau(t)}{dt} & = \frac{\tau}{2}\int_{-\pi}^{\pi} \frac{\da z(\al,t) \cdot \da \partial_t z(\al,t)}{|\da z(\al,t)|}d\alpha
= -\frac{\tau}{2}\int_{-\pi}^{\pi} \frac{\da^{2} z(\al,t) \cdot \partial_t z(\al,t)}{|\da z(\al,t)|}d\alpha  \nonumber \\
& = -\frac{\tau}{2}\int_{-\pi}^{\pi} \frac{\da^{2} z(\al,t) \cdot u(\al,t)}{|\da z(\al,t)|}d\alpha
= -\frac{\tau}{2}\int_{-\pi}^{\pi} \frac{\da^{2} z(\al,t) \cdot \da z^{\perp}(\al,t)}{|\da z(\al,t)|^{3}}u(\al,t) \cdot \da^{\perp} z(\al,t) d\alpha
\end{align}

Adding all the derivatives we get the desired result.

\section{Properties of the curvature in the tilde domain}
\label{SectionCurvature}

In this section we will rewrite the term corresponding to the curvature $K(z(\al,t))$ in the new tilde variables $\tilde{z}(\al,t)$.

We will proceed step by step. Let us recall that the curvature is defined by
\begin{align*}
K(\al,t) = \frac{z_{\al \al}(\al,t) \cdot z_{\al}^{\bot}(\al,t)}{|z_{\al}(\al,t)|^{3}}
\end{align*}

We begin with the term $|z_{\al}(\al,t)|^{3}$. We have that
\begin{align*}
|\tilde{z}_{\al}(\al,t)|^{2} = \langle \da P(z(\al,t)), \da P(z(\al,t))\rangle
= \langle \nabla P(z(\al,t)) \cdot z_{\al}(\al,t), \nabla P(z(\al,t)) \cdot z_{\al}(\al,t)\rangle
\end{align*}
Since $P$ and $P^{-1}$ are conformal, by the Cauchy-Riemann equations
\begin{align*}
\nabla P(z(\al,t))^{T} \nabla P(z(\al,t)) = Q^{2}(\al,t)\text{Id}_2,
\end{align*}
that implies that
\begin{align*}
|\tilde{z}_{\al}(\al,t)|^{3} = Q^{3}(\al,t)|z_{\al}(\al,t)|^{3}
\end{align*}

We move to the other term
\begin{align*}
\langle z_{\al \al}(\al,t), z_{\al}^{\bot}(\al,t)\rangle
& = \langle \da \left(\nabla P^{-1}(\tilde{z}(\al,t)) \cdot \tilde{z}_{\al}(\al,t)\right), (\nabla P^{-1}(\tilde{z}(\al,t)) \cdot \tilde{z}_{\al}(\al,t))^{\bot}\rangle \\
& = \langle \nabla P^{-1}(\tilde{z}(\al,t)) \cdot \tilde{z}_{\al \al}(\al,t), (\nabla P^{-1}(\tilde{z}(\al,t)) \cdot \tilde{z}_{\al}(\al,t))^{\bot}\rangle \\
& + \langle \da \left(\nabla P^{-1}(\tilde{z}(\al,t))\right) \cdot \tilde{z}_{\al}(\al,t), (\nabla P^{-1}(\tilde{z}(\al,t)) \cdot \tilde{z}_{\al}(\al,t))^{\bot}\rangle \equiv W + X
\end{align*}

Again, by the Cauchy-Riemann equations
\begin{align*}
W = \frac{1}{Q^2(\al,t)}\langle \tilde{z}_{\al \al}(\al,t), \tilde{z}_{\al}(\al,t)^{\bot}\rangle
\end{align*}

Developing the terms in $X$, we get
\begin{align*}
\left(\nabla P^{-1}(\tilde{z}(\al,t))\right) \cdot \tilde{z}_{\al}(\al,t)
= \left(
\begin{array}{c}
\tilde{z}^{T}_{\al}(\al,t) \cdot HP^{-1}_{1}(\tilde{z}(\al,t)) \cdot \tilde{z}_{\al}(\al,t) \\
\tilde{z}^{T}_{\al}(\al,t) \cdot HP^{-1}_{2}(\tilde{z}(\al,t)) \cdot \tilde{z}_{\al}(\al,t) \\
\end{array}
\right),
\end{align*}

where  $HP^{-1}_{i}$ denotes the Hessian of the $i$-th component of $P^{-1}$ ($i = 1,2$). Hence, we can write $X$ as
\begin{align*}
X & =
- \tilde{z}^{T}_{\al}(\al,t) \cdot HP^{-1}_{1}(\tilde{z}(\al,t)) \cdot \tilde{z}_{\al}(\al,t) \nabla P^{-1}_{2}(\tilde{z}(\al,t)) \cdot \tilde{z}(\al,t) \\
& + \tilde{z}^{T}_{\al}(\al,t) \cdot HP^{-1}_{2}(\tilde{z}(\al,t)) \cdot \tilde{z}_{\al}(\al,t) \nabla P^{-1}_{1}(\tilde{z}(\al,t)) \cdot \tilde{z}(\al,t).
\end{align*}

This means that
\begin{align*}
K(\al,t) = Q(\al,t) \frac{\tilde{z}_{\al \al}(\al,t) \cdot \tilde{z}^{\bot}_{\al}(\al,t)}{|\tilde{z}(\al,t)|^{3}}
+ X(\al,t) \frac{Q(\al,t)^{3}}{|\tilde{z}(\al,t)|^{3}} \equiv Q(\al,t) \tilde{K}(\al,t) + M(\al,t)
\end{align*}

We will now try to simplify further by exploiting the Cauchy-Riemann equations. We can calculate the Hessian and the gradient terms as:

\begin{align*}
P^{-1}_{1,x}(\tilde{z}) = \Re\left(\frac{4\tilde{z}}{1+\tilde{z}^{4}}\right) \equiv \Re(a) \\
P^{-1}_{1,y}(\tilde{z}) = \Re\left(\frac{4i\tilde{z}}{1+\tilde{z}^{4}}\right) \equiv -\Im(a) \\
P^{-1}_{2,x}(\tilde{z}) = \Im\left(\frac{4\tilde{z}}{1+\tilde{z}^{4}}\right) \equiv \Im(a) \\
P^{-1}_{2,y}(\tilde{z}) = \Im\left(\frac{4i\tilde{z}}{1+\tilde{z}^{4}}\right) \equiv \Re(a) \\
P^{-1}_{1,x,x}(\tilde{z}) = \Re\left(\frac{4(1-3\tilde{z}^{4})}{(1+\tilde{z}^{4})^{2}}\right) \equiv \Re(b) \\
P^{-1}_{1,x,y}(\tilde{z}) = \Re\left(\frac{4i(1-3\tilde{z}^{4})}{(1+\tilde{z}^{4})^{2}}\right) \equiv -\Im(b) \\
P^{-1}_{2,x,x}(\tilde{z}) = \Im\left(\frac{4(1-3\tilde{z}^{4})}{(1+\tilde{z}^{4})^{2}}\right) \equiv \Im(b) \\
P^{-1}_{2,x,y}(\tilde{z}) = \Im\left(\frac{4i(1-3\tilde{z}^{4})}{(1+\tilde{z}^{4})^{2}}\right) \equiv \Re(b) \\
\end{align*}

Therefore the Hessians are
\begin{equation*}
HP_{1}^{-1} =
\left(
\begin{array}{cc}
\Re(b) & -\Im(b) \\
-\Im(b) & -\Re(b)
\end{array}
\right), \quad
HP_{2}^{-1} =
\left(
\begin{array}{cc}
\Im(b) & \Re(b) \\
\Re(b) & -\Im(b)
\end{array}
\right),
\end{equation*}

Calculating further:

\begin{equation*}
\tilde{z}_{\al}^{T} HP_{2}^{-1} \tilde{z}_{\al} = \Re(b)(2\tilde{z}_{\al}^{1}\tilde{z}_{\al}^{2}) + \Im(b)((\tilde{z}^{1}_{\al})^2 - (\tilde{z}^{2}_{\al})^2)
\end{equation*}
\begin{equation*}
\tilde{z}_{\al}^{T} HP_{1}^{-1} \tilde{z}_{\al} = \Re(b)((\tilde{z}^{1}_{\al})^2 - (\tilde{z}^{2}_{\al})^2) - \Im(b)(2\tilde{z}_{\al}^{1}\tilde{z}_{\al}^{2})
\end{equation*}
\begin{align*}
X_1 & = \Re(a)\Re(b)(2(\tilde{z}_{\al}^{1})^2\tilde{z}_{\al}^{2}) + \Re(a)\Im(b)((\tilde{z}^{1}_{\al})^2\tilde{z}_{\al}^{1} - (\tilde{z}^{2}_{\al})^2\tilde{z}_{\al}^{1}) \\
& + \Im(a)\Re(b)(-2\tilde{z}_{\al}^{1}(\tilde{z}_{\al}^{2})^2) + \Im(b)\Im(b)((\tilde{z}^{1}_{\al})^2\tilde{z}_{\al}^{2} - (\tilde{z}^{2}_{\al})^2\tilde{z}_{\al}^{2}) \\
X_2 & = \Re(b)\Re(b)((\tilde{z}^{1}_{\al})^2\tilde{z}_{\al}^{2} - (\tilde{z}^{2}_{\al})^2\tilde{z}_{\al}^{2}) + \Re(a)\Im(b)(-2\tilde{z}_{\al}^{1}(\tilde{z}_{\al}^{2})^2) \\
 & + \Im(a)\Im(b)(2(\tilde{z}_{\al}^{1})^2\tilde{z}_{\al}^{2}) + \Im(a)\Re(b)((\tilde{z}^{1}_{\al})^2\tilde{z}_{\al}^{1} - (\tilde{z}^{2}_{\al})^2\tilde{z}_{\al}^{1}) \\
\end{align*}
This means
\begin{align*}
X = X_1 - X_2 & = ((\tilde{z}_{\al}^{1})^2+(\tilde{z}_{\al}^{2})^2)(\tilde{z}_{\al}^{2}(\Re(a)\Re(b)+\Im(a)\Im(b)) + \tilde{z}_{\al}^{1}(\Re(a)\Im(b)-\Im(a)\Re(b))) \\
& \equiv ((\tilde{z}_{\al}^{1})^2+(\tilde{z}_{\al}^{2})^2)\langle G(z), \tilde{z}_{\al}\rangle. \\
\end{align*}

We can see that

\begin{align*}
-\frac{Q_{\al}}{Q^{3}} & = \frac{1}{2}\da\left(\frac{1}{Q^{2}}\right) = \da(\Re(a)^{2} + \Im(a)^2) \\
& = \Re(a)\Re(b)\tilde{z}^{1}_{\al} - \Re(a)\Im(b)\tilde{z}^{2}_{\al}
+ \Im(a)\Im(b)\tilde{z}^{1}_{\al} + \Im(a)\Re(b)\tilde{z}^{2}_{\al} \\
& = \langle G(z), \tilde{z}_{\al}^{\perp} \rangle
\end{align*}

by the Cauchy-Riemann equations.

If we take one derivative in space of $X$, we obtain

\begin{align*}
\da X & =  ((\tilde{z}_{\al}^{1})^2+(\tilde{z}_{\al}^{2})^2)\langle \nabla G(\tilde{z}) \cdot \tilde{z}_{\al}, \tilde{z}_{\al}\rangle + ((\tilde{z}_{\al}^{1})^2+(\tilde{z}_{\al}^{2})^2)\langle G(\tilde{z}), \tilde{z}_{\al \al}\rangle \\
& = ((\tilde{z}_{\al}^{1})^2+(\tilde{z}_{\al}^{2})^2)\langle \nabla G(\tilde{z}) \cdot \tilde{z}_{\al}, \tilde{z}_{\al}\rangle + |\tilde{z}_{\al}|^{3} \tilde{K}\langle G(\tilde{z}), \tilde{z}_{\al}^{\perp}\rangle \\
& = ((\tilde{z}_{\al}^{1})^2+(\tilde{z}_{\al}^{2})^2)\langle \nabla G(\tilde{z}) \cdot \tilde{z}_{\al}, \tilde{z}_{\al}\rangle - |\tilde{z}_{\al}|^{3} \tilde{K}\frac{Q_{\al}}{Q^{3}},
\end{align*}

This implies

\begin{align*}
K = Q\tilde{K} - Q^{3}\frac{X}{|\tilde{z}|^{3}}
\Rightarrow K_{\al} = (Q\tilde{K})_{\al} + \frac{Q^{3}}{|\tilde{z}_{\al}|}\langle \nabla G(\tilde{z}) \cdot \tilde{z}_{\al}, \tilde{z}_{\al}\rangle - \tilde{K}Q_{\al} = (Q\tilde{K})_{\al} + M_1 + M_2
\end{align*}

Later, we will see that the $M_1$ is a low order term and can be absorbed by the energy.

\section{Initial data}
\label{SectionInitialData}

For initial data we are interested in considering a self-intersecting curve in one point. More precisely, we will use as initial data \emph{splash curves} which are defined this way:

\begin{defi}
\label{defsplash}
We say that $z(\al) = (z_1(\al),z_2(\al))$ is a \emph{splash curve} if
\begin{enumerate}
\item $z_{1}(\al) - \al, z_2(\al)$ are smooth functions and $2\pi$-periodic.
\item $z(\al)$ satisfies the arc-chord condition at every point except at $\alpha_1$ and $\alpha_2$, with $\alpha_1 < \alpha_2$ where $z(\al_1) = z(\al_2)$ and $|z_{\al}(\al_1)|, |z_{\al}(\al_2)| > 0$. This means $z(\al_1) = z(\al_2)$, but if we remove either a neighborhood of $\al_1$ or a neighborhood of $\al_2$ in parameter space, then the arc-chord condition holds.
\item The curve $z(\alpha)$ separates the complex plane into two regions; a  connected water region and a vacuum region (not necessarily connected). The water region contains each point $x+iy$ for which y is large negative. We choose the parametrization such that the normal vector $n=\frac{(-\pa_\alpha z_2(\alpha), \pa_\alpha z_1(\alpha))}{|\pa_\alpha z(\alpha)|}$ points to the vacuum region. We regard the interface to be part of the water region.
\item We can choose a branch of the function $P$ on the water region such that the curve $\tilde{z}(\al) = (\tilde{z}_1(\al),\tilde{z}_2(\al)) = P(z(\al))$ satisfies:
\begin{enumerate}
\item $\tilde{z}_1(\al)$ and $\tilde{z}_2(\al)$ are smooth and $2\pi$-periodic.
\item $\tilde{z}$ is a closed contour.
\item $\tilde{z}$ satisfies the arc-chord condition.
\end{enumerate}
We will choose the branch of the root that produces that
$$ \lim_{y \to -\infty}P(x+iy) = -e^{-i \pi/4}$$
independently of $x$.
\item $P(w)$ is analytic at $w$ and $\frac{dP}{dw}(w) \neq 0$ if $w$ belongs to the interior of the water region. Furthermore, $(\pm \pi, 0)$ and $(0,0)$ belong to the vacuum region.
\item $\tilde{z}(\al) \neq q^l$ for $l=0,...,4$, where
\begin{equation}\label{points}
q^0=\left(0,0\right),\quad
q^1=\left(\frac{1}{\sqrt{2}},\frac{1}{\sqrt{2}}\right),\quad
q^2=\left(\frac{-1}{\sqrt{2}},\frac{1}{\sqrt{2}}\right),\quad
q^3=\left(\frac{-1}{\sqrt{2}}, \frac{-1}{\sqrt{2}}\right),\quad
q^4=\left(\frac{1}{\sqrt{2}}, \frac{-1}{\sqrt{2}}\right).
\end{equation}
\end{enumerate}
\end{defi}

Moreover, we will define a \emph{splat curve} as a splash curve but replacing condition (2) by the fact that the curve touches itself along an arc, instead of a point.

Let us note that in order to measure when the transformation $P$ is regular, we need to control the distance to the points $q^{l}$. In order to do so, we introduce the function

$$ m(q^{l})(\al, t) \equiv |\tilde{z}(\al,t)-q^{l}|$$

for $l = 0,\ldots, 4$.

We have performed numerical simulations, as explained in \cite{Castro-Cordoba-Fefferman-Gancedo-GomezSerrano:splash-water-waves} with the following initial data on the non-tilde domain:

$$ z^{0}_{1}(\al) = \al + \frac{1}{4}\left(-\frac{3\pi}{2} - 1.9\right)\sin(\al)+\frac{1}{2}\sin(2\al)+\frac{1}{4}\left(\frac{\pi}{2} - 1.9\right)\sin(3\al)$$
$$ z^{0}_{2}(\al) = \frac{1}{10}\cos(\al) - \frac{3}{10}\cos(2\al) + \frac{1}{10}\cos(3\al)$$

Note that $z\left(\frac{\pi}{2}\right) = z\left(-\frac{\pi}{2}\right)$ (splash). Instead of prescribing an initial condition for $\tilde{\omega}$, we prescribed the normal component of the velocity to ensure a more controlled direction of the fluid. From that we got the initial $\tilde{\omega}(\al,0)$ using the following relations. Let $\psi$ be such that $\nabla^{\perp}\psi = v$ and $\Psi(\al)$ its restriction to the interface. The initial normal velocity is then prescribed by setting
$$ u^{0}_{n}(\al)|z_{\al}(\al)| = \Psi_{\al}(\al) = 3 \cdot \cos(\al) - 3.4 \cdot \cos(2\al) + \cos(3\al) + 0.2\cos(4\al).$$

The reader may easily check that the above $z^{0}_{1}$ and $z^{0}_{2}$ yield a splash curve, i.e. the conditions in Definition \ref{defsplash} are satisfied. See Figure \ref{PictureSplash}.

\begin{figure}[h!]\centering
\includegraphics[scale=0.4]{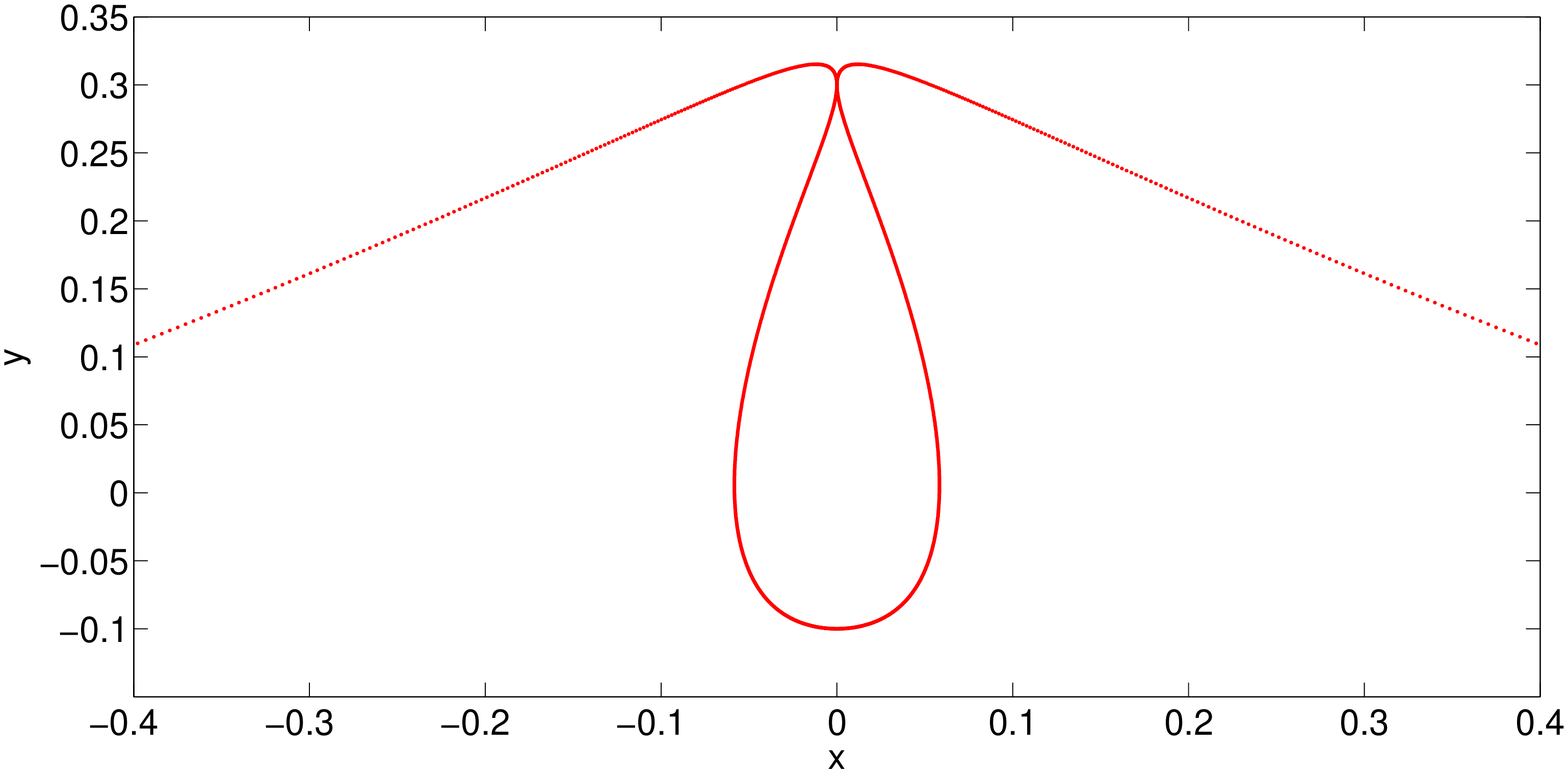}
\caption{Splash singularity. The interface self intersects in a point.}
\label{PictureSplash}
\end{figure}

In order to get an initial data for the splat singularity, one only needs to perturb the splash curve so that it $z_{0}^{1}(\al) = 0$ on a neighbourhood of both $\alpha = \pm \frac{\pi}{2}$. The normal velocity can be the same since it has the right sign (the one that separates the curve). By continuity, the Rayleigh-Taylor function should remain positive.

For the case where the energy is independent on the surface tension coefficient (see Section \ref{SectionEnergyRT}), we need the curve to satisfy the Rayleigh-Taylor condition initially. This is always the case when the surface tension coefficient is small enough. To illustrate this phenomenon, we have plotted in the next figure the Rayleigh-Taylor condition for different values of the surface tension coefficient and the initial condition described above. We can see that for small enough values of $\tau$ (0 and 0.1): the Rayleigh-Taylor condition $\sigma$ is strictly positive. For bigger values of $\tau$, the Rayleigh-Taylor condition $\sigma$ has distinct sign.

\begin{figure}[h!]\centering
\includegraphics[scale=0.4]{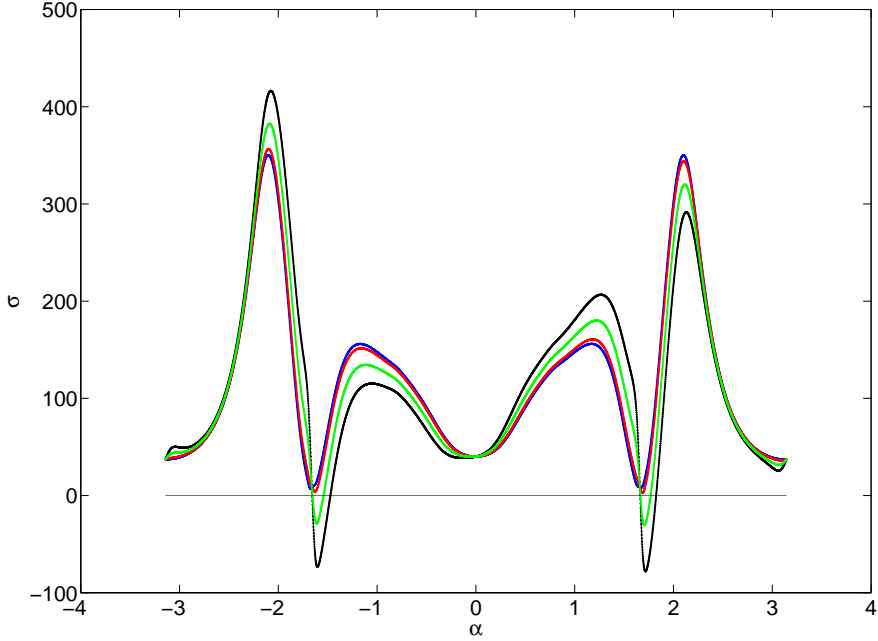}
\caption{Rayleigh-Taylor function for different values of $\tau$: $\tau = 0$ (blue), $\tau = 0.1$ (red), $\tau = 0.5$ (green), $\tau = 1$ (black)}
\label{PictureRTCoeff}
\end{figure}

\section{Energy without the Rayleigh-Taylor condition}
\label{SectionEnergyWRT}
In this section, we prove local existence in the tilde domain, where the time of existence depends on the surface tension coefficient. This theorem has the advantage that the initial data does not need to satisfy the Rayleigh-Taylor condition and it works for every $\tau > 0$.

\begin{theorem}
\label{localexistencetildeWRT}
Let $k \geq 3$. Let $\tilde{z}^{0}(\al)$ be the image of a splash curve by the map $P$ parametrized in such a way that $|\da \tilde{z}^{0}(\al)| = \frac{L}{2\pi}$, where $L$ is the length of the curve in a fundamental period, and such that $\tilde{z}_{1}^{0}(\al), \tilde{z}_2^{0}(\al) \in H^{k+2}(\T)$. Let $\tilde{\omega}(\al,0) \in H^{k+\frac{1}{2}}(\T)$.
Then there exist a finite time $T > 0$, a time-varying curve $\tilde{z}(\al,t)  \in C([0,T];H^{k+2})$, and a function $\tilde{\omega}(\al,t) \in C([0,T];H^{k+\frac{1}{2}})$ providing a solution of the water wave equations (\ref{zeq} - \ref{eqomega}).
\end{theorem}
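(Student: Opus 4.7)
The plan is to prove this by an energy method adapted from Ambrose--Masmoudi in the tilde (conformal) domain, combined with a regularization/continuation argument. The key object is an energy functional
\begin{align*}
E(t) &= \|\tilde{z}(\cdot,t)-(\cdot,0)\|_{H^{k+2}}^2 + \|\tilde{\omega}(\cdot,t)\|_{H^{k+1/2}}^2 + \|\tilde{\varphi}(\cdot,t)\|_{H^{k+1}}^2 \\
&\quad + \|\mathcal{F}(\tilde{z})\|_{L^\infty}^2 + \sum_{l=0}^{4}\Big\|\frac{1}{m(q^l)(\cdot,t)}\Big\|_{L^\infty}^2,
\end{align*}
where $\tilde{\varphi}$ is the Beale--Hou--Lowengrub quantity from \eqref{varphi}. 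The arc-chord term and the $1/m(q^l)$ terms guarantee that the transformation $P$ stays well-defined, regular, and invertible on a neighborhood of the curve, and hence that $Q$, $\nabla P^{\pm 1}$, and $HP^{-1}_i$ stay smooth and bounded along the flow. The choice $|\tilde{z}_\alpha| = A(t)$ driven by $\tilde{c}$ makes tangential derivatives of $\tilde z$ and arclength derivatives interchangeable up to the scalar $A(t)$, which simplifies the high-order estimates considerably.

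The main task is the a priori differential inequality $\frac{d}{dt}E(t) \leq C(E(t))$. For the evolution of $\tilde{\omega}$ in \eqref{eqomega}, the dangerous contributions are the third-order surface tension term $\tau (Q\tilde K)_\alpha$, the time derivative of the Birkhoff--Rott integral, and the $M_1$ correction identified in Section \ref{SectionCurvature}. The approach is: apply $\Lambda^{k+1/2}$ to \eqref{eqomega}, multiply by $\Lambda^{k+1/2}\tilde\omega$, and integrate. The surface tension term, once the decomposition $K = Q\tilde K - Q^3 X/|\tilde z|^3$ from Section \ref{SectionCurvature} is used, combined with $\|\tilde{z}\|_{H^{k+2}}^2$ control via $\tilde K$, produces a positive dissipation-like quantity of the form $\tau\int |\Lambda^{k+1}(\cdot)|^2 \, d\alpha$; the lower-order piece $M_1$ is controlled by Sobolev embedding and absorbed. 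The crucial commutator structure of the time derivative of $BR$ against $\tilde z_\alpha$ is handled exactly as in Ambrose, rewriting it using $\tilde \varphi$ to move the derivative onto a favorable operator. A separate evolution equation for $\tilde\varphi$, differentiated from \eqref{varphi} and the equations \eqref{zeq}--\eqref{eqomega}, must be closed at the level $H^{k+1}$; here the surface tension gives the parabolic smoothing that makes $\tilde\varphi$ gain half a derivative relative to $\tilde\omega$.

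For $\tilde{z}$ in $H^{k+2}$, one differentiates \eqref{zeq}, multiplies by $\Lambda^{k+2}\tilde z$, and uses that $BR(\tilde z,\tilde\omega)$ is essentially $\tfrac12 \tilde\omega \, \tilde z_\alpha^\perp/|\tilde z_\alpha|^2$ plus a smoothing operator, so highest derivatives on $\tilde z$ from $BR$ are controlled by $\tilde\omega$ in one lower norm; this is why the loss is compensated by the $\tau$-dissipation from the curvature term. The arc-chord quantity $\mathcal{F}(\tilde z)$ and the functions $m(q^l)$ evolve pointwise under the flow and their growth is controlled by $\|\tilde z_t\|_{L^\infty}$, hence by $E$.

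Once the inequality $\frac{dE}{dt}\leq C(E)$ is established, existence is obtained by a standard regularization: replace the singular kernels in $BR$ and the derivatives in the curvature by a one-parameter mollified version indexed by $\varepsilon>0$, solve the mollified ODE system in a Banach space via Picard iteration, show that the energy bound is uniform in $\varepsilon$ on a time interval $[0,T]$ with $T=T(E(0))>0$, and then pass to the limit $\varepsilon\to 0$ by the Aubin--Lions compactness lemma. Uniqueness follows from an energy estimate on the difference of two solutions at one order of regularity lower. The main obstacle, as usual for water waves with surface tension, is closing the energy at the top order while keeping track of the exact cancellations between $BR_t \cdot \tilde z_\alpha$ and $\tau (Q\tilde K)_\alpha$ mediated by $\tilde\varphi$; the novelty here is that every intermediate computation must be redone with the conformal weights $Q$, $HP^{-1}_i$, $G(\tilde z)$ rather than in the flat geometry, which is precisely what Section \ref{SectionCurvature} prepares.
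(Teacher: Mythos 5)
Your broad plan (energy method in the tilde domain plus regularization) is the right template, but the specific energy you propose and several of the mechanisms you invoke do not match what is actually needed for Theorem \ref{localexistencetildeWRT}, and as written the argument would not close.

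First, the energy functional is the one appropriate for the \emph{other} local existence theorem in the paper (Theorem \ref{localexistencetilde}, Section \ref{SectionEnergyRT}), not this one. The point of Theorem \ref{localexistencetildeWRT} is precisely to avoid the Rayleigh--Taylor sign condition; the way the paper achieves this is by building the energy out of $\tilde\omega$ and the curvature $\tilde K$ with explicit weights $\tau^{-1}$ and $\tau^{-2}$ (the terms $A$, $B$, $C$ in Section \ref{primeraenergia}). Your functional instead uses $\tilde\varphi$, which is the Ambrose--Masmoudi device designed to exploit the \emph{positivity} of the Rayleigh--Taylor function; without that positivity, a $\tilde\varphi$-based energy has no reason to control the evolution. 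Worse, requiring simultaneously $\tilde\omega\in H^{k+1/2}$ and $\tilde\varphi\in H^{k+1}$ with $\tilde z\in H^{k+2}$ is internally inconsistent: by the relation \eqref{varphi} and Lemma \ref{lemmaequivenergies}, $\tilde\varphi$ and $\tilde\omega$ have exactly the same Sobolev regularity when $\tilde z$ is two orders smoother, so demanding an extra half derivative on $\tilde\varphi$ silently imposes an extra half derivative on $\tilde\omega$.

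Second, the claim that the surface-tension term in \eqref{eqomega} produces ``a positive dissipation-like quantity of the form $\tau\int|\Lambda^{k+1}(\cdot)|^2$'' and that ``surface tension gives parabolic smoothing'' is not correct. Surface tension is dispersive, not parabolic; nothing in the energy budget has a definite sign coming from $\tau$. What actually happens is an \emph{algebraic cancellation}: the top-order piece of $\frac{dA}{dt}$ (coming from $\tilde K_t \sim \tfrac{Q^2}{2|\tilde z_\alpha|^3}H(\tilde\omega_{\alpha\alpha})$) and the top-order piece of $\frac{dB}{dt}$ (coming from the $\tau(Q\tilde K)_\alpha$ term of $\tilde\omega_t$, with the $\tau^{-1}$ weight of $B$ killing the $\tau$) are the terms labelled $A^2$ and $B^{2,2,1}$, and they cancel \emph{exactly}. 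Without the $\tau^{-1}$ and $\tau^{-2}$ weights in $B$ and $C$ this matching never happens, and your proposed energy inequality $\frac{d}{dt}E\le C(E)$ would fail at the top order.

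Third, the approximation scheme is underspecified in a way that matters. You propose one mollification parameter and Aubin--Lions. The paper introduces three parameters $\ep,\delta,\mu$ and, crucially, an artificial viscosity term $-\ep\,\phi_\mu*\phi_\mu*\big(\Lambda(\tilde\omega)/Q^{2k+3}\big)$ in the regularized $\tilde\omega$-equation \eqref{epsdelmuwt}. The reason is that once you mollify, the exact cancellation $A^2+B^{2,2,1}=0$ degrades to a commutator between $\co$ and the weight $Q^{2k+3}$, and this commutator escapes by half a derivative on $\tilde\omega$; it is the $\ep\|\da^{k+1}\tilde\omega\|_{L^2}^2$ coming from the artificial viscosity that absorbs it via \eqref{commmol}. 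Only after this is the estimate uniform in $\delta$, and only after sending $\delta\to 0$ does the exact cancellation return so one can finally send $\ep\to 0$. A single mollification with compactness cannot reproduce this, because there is no uniform-in-mollifier bound without the dissipative term.

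A minor point: you describe $BR(\tilde z,\tilde\omega)$ as ``$\tfrac12\tilde\omega\,\tilde z_\alpha^\perp/|\tilde z_\alpha|^2$ plus a smoothing operator,'' but the leading-order relation used in the paper is $BR=\tfrac{\tilde z_\alpha^\perp}{2|\tilde z_\alpha|^2}H(\tilde\omega)+\text{l.o.t.}$ (Appendix), i.e., the Hilbert transform of $\tilde\omega$, not $\tilde\omega$ itself; this distinction is what makes the $H(\tilde\omega_{\alpha\alpha})$ and $\Lambda(\da^{k+1}\tilde\omega)$ structures appear and cancel in the computation above.
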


The proof below is based in the following energy estimates:

\subsection{The energy}\label{primeraenergia}

We will define the energy for $k \geq 3$ as


 $$ E^2_{k}(t) = \mathcal{EE}^{2}(t) + \underbrace{2|\tilde{z}_{\al}|^{3}\int Q^{2k+1}\left(\da^{k}(\tilde{K})\right)^{2}}_{A}
+ \underbrace{\frac{1}{\tau}\int Q^{2k+2}\da^{k}(\tilde{\omega})\Lambda(\da^{k}(\tilde{\omega}))}_{B}
+ \underbrace{\frac{1}{2|\tilde{z}_{\al}|\tau^{2}}\int Q^{2k+3}(\da^{k}(\tilde{\omega}))^{2}\tilde{\omega}^{2}}_{C},$$

 $$\mathcal{EE}^{2}(t) = \|\tilde{z}\|_{L^{2}}^{2} + \|\tilde{\omega}\|_{L^{2}}^{2} +\|\mathcal{F}(z)\|^2_{L^\infty}(t) +\sum_{l=0}^4\frac{1}{m(q^l)(t)},$$

where $m(q^{l})(t) = \min_{\al\in\T}q^{l}(\al,t)$ for $l = 0,\ldots,4$  and $\Lambda = (-\Delta)^{1/2}.$ From now on, we will denote the Hilbert transform of a function $f$ by $H(f)$, where
\begin{align*}
H(f)(\al) = \frac{PV}{\pi}\int_{-\pi}^{\pi}\frac{f(\al-\beta)}{2\tan\left(\frac{\beta}{2}\right)}d\beta.
\end{align*}

Recall that the operator $\Lambda$ can also be written as $\Lambda(f) = \da H(f)$.



\subsection{The energy estimates}

The energy estimates for $\mathcal{EE}$ were proved in \cite{Cordoba-Cordoba-Gancedo:interface-water-waves-2d} and in \cite{Castro-Cordoba-Fefferman-Gancedo-GomezSerrano:finite-time-singularities-Euler}. In this section we will focus on the new terms ($A$, $B$ and $C$).

\subsubsection{$\tilde{K}$}

\begin{prop}
$$ \tilde{K}_t = \text{NICE3 } + \frac{Q^{2}}{2|\tilde{z}_{\al}|^{3}}H(\tilde{\omega}_{\al \al}) + \frac{1}{|\tilde{z}_{\al}|^{3}}(Q^{2})_{\al}H(\tilde{\omega}_{\al}),$$

where NICE3 means $$ \int Q^{j}\da^{k}(\tilde{K}) \da^{k}(NICE3) \leq CE_{k}^{p}(t)$$
for some positive constants $C, p$ and any $j$.
\end{prop}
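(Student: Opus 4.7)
The plan is to differentiate $\tilde{K}(\al,t) = (\tilde{z}_{\al\al}\cdot\tilde{z}_\al^\bot)/|\tilde{z}_\al|^3$ in time, making systematic use of the arc-length gauge $|\tilde{z}_\al(\al,t)|=A(t)$ fixed by the choice of $\tilde{c}$. That gauge provides the pointwise identities $\tilde{z}_\al\cdot\tilde{z}_{\al\al}\equiv 0$, $\tilde{z}_{\al\al}^\bot\cdot\tilde{z}_\al^\bot\equiv 0$, and $\tilde{z}_{\al\al\al}\cdot\tilde{z}_\al = -|\tilde{z}_{\al\al}|^2$, which are the algebraic keys to everything that follows. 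After substituting $\tilde{z}_t = Q^2 BR(\tilde{z},\tilde{\omega}) + \tilde{c}\tilde{z}_\al$ from \eqref{zeq} and taking two $\al$ derivatives, the purely tangential pieces $\da^2(\tilde{c}\tilde{z}_\al)$ and $\tilde{z}_{\al\al}\cdot\tilde{z}_{\al t}^\bot$ collapse, after dotting with $\tilde{z}_\al^\bot/A^3$, into combinations of $\tilde{K}$, $\tilde{c}$, $\tilde{c}_\al$, $A'(t)/A(t)$, and lower-derivative relatives; all are NICE3, since $\tilde{c}$ is one derivative smoother in $\tilde\omega$ than $BR$ by its defining integral formula.

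The central step is the decomposition of the Birkhoff--Rott operator into its Hilbert-transform principal part,
\begin{align*}
BR(\tilde{z},\tilde{\omega})(\al) = \frac{\tilde{z}_\al^\bot(\al)}{2|\tilde{z}_\al(\al)|^2}\, H(\tilde{\omega})(\al) + R(\tilde{z},\tilde{\omega})(\al),
\end{align*}
where $R$ is one derivative smoother in $\tilde\omega$; this is obtained by expanding the kernel of \eqref{BR} near $\beta=\al$ and exploiting $\tilde{z}_\al\cdot\tilde{z}_{\al\al}=0$ to eliminate the would-be cubic term in the denominator. I would then apply $\da^2(Q^2\,\cdot)$ to the principal part. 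Leibniz produces six terms, but after dotting with $\tilde{z}_\al^\bot/A^3$ the four containing $\tilde{z}_{\al\al}^\bot$ vanish by $\tilde{z}_{\al\al}^\bot\cdot\tilde{z}_\al^\bot=0$, while the one carrying $\tilde{z}_{\al\al\al}^\bot$ reduces to a harmless $-|\tilde{z}_{\al\al}|^2\,H(\tilde{\omega})$ coefficient via $\tilde{z}_{\al\al\al}^\bot\cdot\tilde{z}_\al^\bot = \tilde{z}_{\al\al\al}\cdot\tilde{z}_\al$. The only survivors are exactly
\begin{align*}
\frac{Q^2}{2|\tilde{z}_\al|^3}\,H(\tilde{\omega}_{\al\al}) + \frac{(Q^2)_\al}{|\tilde{z}_\al|^3}\,H(\tilde{\omega}_\al),
\end{align*}
as asserted. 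The contribution $\da^2(Q^2 R)$ involves at most $R_\al$ and $R_{\al\al}$; the one-derivative smoothing of $R$ transfers a derivative away from $\tilde{\omega}_{\al\al}$ and these pieces fall into NICE3.

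To close, I would verify that every NICE3 piece satisfies $\int Q^j\,\da^k(\tilde{K})\,\da^k(\text{NICE3})\leq C E_k^p$ by routine moves: Moser-type product estimates in $H^k(\T)$, commutator estimates for the Hilbert transform $H$, and the uniform $L^\infty$ control of $Q$, $1/|\tilde{z}_\al|$, $\tilde{c}$, $\tilde{\omega}$, and $\tilde{z}_{\al\al}$ supplied by $\mathcal{EE}$, the distance functions $m(q^l)$, and the $A$, $B$, $C$ summands of $E_k^2$. The main obstacle is the quantitative control of the remainder $R$ and its two $\al$-derivatives in the decomposition of $BR$: one has to be sure that no hidden $H(\tilde{\omega}_{\al\al})$ singularity is concealed inside $R$, and it is precisely the orthogonality $\tilde{z}_\al\cdot\tilde{z}_{\al\al}=0$ furnished by the tangential gauge that keeps the near-diagonal expansion of the kernel clean enough for the asserted two surface terms to be the only top-order ones.
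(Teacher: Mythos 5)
Your proposal is correct and follows essentially the same path as the paper's proof: expand $\tilde K_t$ by the quotient rule, substitute $\tilde z_t = Q^2 BR + \tilde c\,\tilde z_\al$, use the arc-length gauge identities $\tilde z_\al\cdot\tilde z_{\al\al}=0$ and $\tilde z_{\al\al\al}\cdot\tilde z_\al=-|\tilde z_{\al\al}|^2$ to kill the tangential and would-be cubic pieces, and extract the two top-order terms from $\da^2(Q^2 BR)\cdot\tilde z_\al^\perp$ via the principal-part decomposition $BR=\frac{\tilde z_\al^\perp}{2|\tilde z_\al|^2}H(\tilde\omega)+\mathrm{l.o.t.}$ (the paper does the identical computation, only presented as directly inspecting which factor the derivatives in $BR_\al$ and $BR_{\al\al}$ fall on). The only blemish is a harmless miscount — after dotting with $\tilde z_\al^\perp$, it is two Leibniz terms (those carrying $\tilde z_{\al\al}^\perp$) that vanish by orthogonality, not four — but this does not affect the identification of the surviving terms or the conclusion.
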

\begin{proof}
We start writing $\tilde{K}_t$

\begin{align*}
\tilde{K}_t & = \frac{-3}{|\tilde{z}_{\al}|^{5}}\tilde{z}_{\al t} \cdot \tilde{z}_{\al} \tilde{z}_{\al \al} \cdot \tilde{z}_{\al}^{\perp} + \frac{1}{|\tilde{z}_{\al}|^{3}}\left(\tilde{z}_{\al \al t} \cdot \tilde{z}_{\al}^{\perp} + \tilde{z}_{\al \al} \cdot \tilde{z}_{\al t}^{\perp}\right) = P_0 + P_1 + P_2
\end{align*}

Calculating further $P_0$ we get that
\begin{align*}
P_0 & = \frac{-3}{|\tilde{z}_{\al}|^{5}}(Q^{2}BR + c \tilde{z}_{\al})_{\al} \cdot \tilde{z}_{\al} \tilde{z}_{\al \al} \cdot \tilde{z}_{\al}^{\perp} = \text{NICE3},
\end{align*}

by the estimates proved in the Appendix.

On the one hand, developing $P_2$, we obtain
\begin{align*}
P_2 & = \frac{1}{|\tilde{z}_{\al}|^{3}}\left(\tilde{z}_{\al \al} \cdot \tilde{z}_{\al t}^{\perp}\right)
= -\frac{1}{|\tilde{z}_{\al}|^{3}}\left(\tilde{z}_{\al \al}^{\perp} \cdot \tilde{z}_{\al t}\right)=\\
& = -\frac{1}{|\tilde{z}_{\al}|^{3}}\left((Q^{2} BR)_{\al} + (\tilde{c}\tilde{z}_{\al})_{\al}\right) \cdot \tilde{z}_{\al \al}^{\perp} \\
& = \text{NICE3 } -\frac{1}{|\tilde{z}_{\al}|^{3}}\left(\frac{Q^{2}}{2}\frac{\tilde{z}_{\al}^{\perp}}{|\tilde{z}_{\al}|^{2}}H(\tilde{\omega}_{\al}) + \tilde{c}_{\al}\tilde{z}_{\al}\right) \cdot \tilde{z}_{\al \al}^{\perp} = \text{NICE3},
\end{align*}
since $\tilde{c}_{\al}$ is as regular as $\tilde{\omega}, \tilde{z}_{\al \al}$ and therefore bounded in $H^{k}$. On the other, $P_1$ gives rise to
\begin{align*}
P_1 & = \frac{1}{|\tilde{z}_{\al}|^{3}}\left(\tilde{z}_{\al \al t} \cdot \tilde{z}_{\al}^{\perp}\right) = \frac{1}{|\tilde{z}_{\al}|^{3}}\left((Q^{2} BR)_{\al \al} + (\tilde{c}\tilde{z}_{\al})_{\al \al}\right) \cdot \tilde{z}_{\al}^{\perp} = P_{1,1} + P_{1,2}
\end{align*}
We can further develop $P_{1,2}$ to obtain
\begin{align*}
P_{1,2} = \text{NICE3},
\end{align*}
since the terms vanish either by integrating by parts, by being a dot product between two orthogonal vectors or because $\tilde{c}_{\al} = \text{NICE3}$. We also have that
\begin{align*}
P_{1,1} & = \text{NICE3 } + \frac{1}{|\tilde{z}_{\al}|^{3}}\left(2(Q^{2})_{\al} BR_{\al} + Q^{2} BR_{\al \al}\right)\cdot \tilde{z}_{\al}^{\perp} = \text{NICE3 } + P_{1,1,1} + P_{1,1,2}
\end{align*}
The only term in $BR_{\al}$ which is not NICE3 is when we hit with the derivative in $\tilde{\omega}$. Therefore
\begin{align*}
P_{1,1,1} & = \text{NICE3 } + \frac{1}{|\tilde{z}_{\al}|^{3}}2(Q^{2})_{\al} \frac{1}{2}H(\tilde{\omega}_{\al})
\end{align*}
Finally, regarding $P_{1,1,2}$ and keeping in mind that hitting with all the derivatives in $z$ leads us to a term which has the factor $\tilde{z}_{\al \al \al} \cdot \tilde{z}_{\al} = - |\tilde{z}_{\al \al}|^{2}$, giving us the extra regularity we needed to integrate the term.
\begin{align*}
P_{1,1,2} & = \text{NICE3 } + \frac{Q^{2}}{|\tilde{z}_{\al}|^{3}}\tilde{z}_{\al}^{\perp} \cdot \left(\frac{1}{2\pi}\int \frac{(\tilde{z}(\al)-\tilde{z}(\beta))^{\perp}}{|\tilde{z}(\al)-\tilde{z}(\beta)|^{2}}\tilde{\omega}_{\al \al}(\beta)d\beta\right) \\
& = \text{NICE3 } + \frac{Q^{2}}{2|\tilde{z}_{\al}|^{3}}H(\tilde{\om}_{\al \al}).
\end{align*}
We should notice that there doesn't appear a term proportional to $H(\tilde{\om}_{\al})$ since the kernel that results from subtracting the Hilbert transform has room for two derivatives instead of one.

Adding all the previous estimates together we get the desired result.
\end{proof}

\subsubsection{$\tilde{\omega}$}

We first notice that $M_1$ (one of the terms in the curvature) is of the order of $z_{\al}$ and therefore it can be absorbed by the energy. Hence
$$ \da K  = (\tilde{K}Q)_{\al} - \tilde{K}Q_{\al} + \text{low order terms}$$

We will follow the proof done by Ambrose in \cite{Ambrose:well-posedness-vortex-sheet}. Taking into account the estimates for the implicit operator done in \cite{Cordoba-Cordoba-Gancedo:interface-water-waves-2d}, we are left to see the impact of the $Q$ factor in the singular term $(\tilde{c}\tilde{\omega})_{\al}$, since the impact into the others is either trivial (the ones that come from the factor proportional to the curvature) or is zero (the rest of the terms).

\begin{lemma}
$$ \da^{k}(\tilde{c}_{\al} \tilde{\omega}) = \text{NICE35 } + \frac{Q^{2}\tilde{\omega}^{2}}{2|\tilde{z}_{\al}|}H(\da^{k}(\tilde{K})),$$

where NICE35 means  $$ \int Q^{j} \Lambda (\da^{k} (\tilde{\omega}) )\text{NICE35} \leq CE_{k}^{p}(t)$$
for some positive constants $C, p$ and any $j$.
\end{lemma}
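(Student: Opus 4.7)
\emph{Proof proposal.} The plan is to extract the principal singular part of $\tilde c_\al$ from its explicit formula, and then apply Leibniz together with a Hilbert-transform commutator identity to rewrite the surviving top-order term as the stated expression involving $H(\da^k \tilde K)$.

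First I would compute $\tilde c_\al$ from the integral formula. The mean piece is $\al$-independent, so after $\da^k$ it only produces smooth factors and falls into NICE35. The remaining piece is $-(Q^2 BR)_\al\cdot \tilde z_\al/|\tilde z_\al|^2$. Writing
\[
(Q^2 BR)_\al\cdot \tilde z_\al=\da(Q^2 BR\cdot \tilde z_\al)-Q^2 BR\cdot \tilde z_{\al\al},
\]
the first summand is smoother than the naive count because the leading normal Hilbert piece of $BR$ is orthogonal to $\tilde z_\al$, so after eventual integration by parts it contributes only NICE35. Using the standard splitting of $BR$ into a normal Hilbert part plus a smoother remainder as in \cite{Cordoba-Cordoba-Gancedo:interface-water-waves-2d}, the second summand produces a term proportional to $Q^2 \tilde K H(\tilde\omega)/(2|\tilde z_\al|)$ up to NICE35 pieces. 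Hence $\tilde c_\al = \dfrac{Q^2 \tilde K H(\tilde\omega)}{2|\tilde z_\al|}+\text{NICE35}$.

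Next I multiply by $\tilde\omega$ and apply $\da^k$ by Leibniz. Every distribution of the $k$ derivatives among the factors $Q^2/|\tilde z_\al|$, $\tilde\omega$, $H(\tilde\omega)$ and $\tilde K$ other than the one placing all $k$ derivatives on $\tilde K$ leaves at most $k$ derivatives on $\tilde\omega$ or on smoother factors built from $\tilde z_\al$ and $Q$; these are controlled by $\mathcal{EE}$, $B$ and $C$, and standard commutator estimates for $H$ place such terms into NICE35. This leaves the single contribution
\[
\da^k(\tilde c_\al \tilde\omega)=\frac{Q^2 \tilde\omega H(\tilde\omega)}{2|\tilde z_\al|}\da^k\tilde K+\text{NICE35}.
\]
To convert this to the form stated in the lemma, I would invoke the commutator identity $H(\tilde\omega\,\da^k\tilde K)=\tilde\omega H(\da^k\tilde K)+[H,\tilde\omega](\da^k\tilde K)$, in which the commutator gains one derivative and is therefore NICE35 when paired with $Q^j\Lambda(\da^k\tilde\omega)$. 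Rearranging gives $\tilde\omega H(\tilde\omega)\da^k\tilde K=\tilde\omega^2 H(\da^k\tilde K)+\text{NICE35}$, which substituted above produces the stated identity.

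The main obstacle is the bookkeeping in the two Leibniz/commutator steps: one must verify that every remainder produced by the Hilbert splitting of $BR$, by the Leibniz redistribution, and by the final commutator identity is indeed of a type whose integration against $Q^j\Lambda(\da^k\tilde\omega)$ is bounded by $E_k^p$. This is a delicate but standard application of Hilbert commutator estimates in Sobolev spaces, following the scheme of \cite{Ambrose:well-posedness-vortex-sheet}, adapted to accommodate the conformal factor $Q$ arising from the desingularizing map $P$ and the identification of the curvature in the tilde domain carried out in Section \ref{SectionCurvature}.
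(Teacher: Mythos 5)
There is a genuine gap. Your extraction of the principal part of $\tilde c_\al$ before taking $\da^k$ leads you to the term $\frac{Q^2\tilde\omega H(\tilde\omega)}{2|\tilde z_\al|}\,\da^k\tilde K$, whereas the lemma asserts $\frac{Q^2\tilde\omega^2}{2|\tilde z_\al|}\,H(\da^k\tilde K)$. These differ by
\[
\frac{Q^2\tilde\omega}{2|\tilde z_\al|}\bigl(H(\tilde\omega)\,\da^k\tilde K-\tilde\omega\,H(\da^k\tilde K)\bigr)
=\frac{Q^2\tilde\omega}{2|\tilde z_\al|}\bigl([H,\tilde\omega]\da^k\tilde K-[H,\da^k\tilde K]\tilde\omega\bigr),
\]
and only the first commutator $[H,\tilde\omega]\da^k\tilde K$ gains a derivative (since $\tilde\omega$ is the smooth factor). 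The second piece $[H,\da^k\tilde K]\tilde\omega$ does not: $\da^k\tilde K$ is only in $L^2$, so no Calder\'on-type gain is available, and the difference lives at the same $L^2$ level as the main term. The commutator identity you invoke, $H(\tilde\omega\,\da^k\tilde K)=\tilde\omega H(\da^k\tilde K)+[H,\tilde\omega]\da^k\tilde K$, is true but relates $H(\tilde\omega\,\da^k\tilde K)$ to $\tilde\omega H(\da^k\tilde K)$, not $H(\tilde\omega)\,\da^k\tilde K$ to $\tilde\omega H(\da^k\tilde K)$; your final ``rearranging gives'' step does not follow from it.

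The paper sidesteps this by never commuting the principal-part extraction past $\da^k$. It substitutes $\tilde c_\al=-\frac{\tilde z_\al}{|\tilde z_\al|^2}\cdot(Q^2BR)_\al$, applies $\da^k$ directly to the Birkhoff--Rott integral, and identifies the unique worst contribution as the one where all $k$ extra derivatives land on the $\tilde z$-dependence of the kernel; the factor $\tilde\omega(\beta)$ is then pulled out at $\beta=\al$, producing $\tilde\omega^2$, and the remaining integral is recognized as $H(\da^k\tilde z_{\al\al}^\perp)\cdot\tilde z_\al\sim -|\tilde z_\al|^3 H(\da^k\tilde K)$. The competing contribution in which the derivatives fall on $\tilde\omega(\beta)$ gives $\tilde K\,H(\da^k\tilde\omega)$, which is genuinely half a derivative smoother (it lies in $H^{1/2}$, since $\tilde\omega\in H^{k+1/2}$ and $\tilde K\in H^k$), hence NICE35 --- that is where the asymmetry between $\tilde\omega$ and $\tilde K$ is actually exploited. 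If you want to first isolate a principal part of $\tilde c_\al$, you need $\tilde c_\al\approx\frac{Q^2\tilde\omega}{2|\tilde z_\al|}H(\tilde K)$ (Hilbert transform on $\tilde K$, not on $\tilde\omega$), which after Leibniz yields the correct formula immediately without any final commutator step. In addition, your intermediate claim that $\da(Q^2BR\cdot\tilde z_\al)$ contributes only NICE35 is not justified: although $BR\cdot\tilde z_\al$ kills the leading $H(\tilde\omega)$ piece of $BR$, the gain in regularity is limited by $\tilde z$, and after $\da^{k+1}$ this term is still at the $L^2$ level unless one tracks where the derivative goes inside the kernel, which is exactly what the paper's argument does.
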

\begin{proof}
The most singular term is when we hit all the derivatives in $\tilde{c}_{\al}$, since if we hit all of them in $\tilde{\omega}$, that term would belong to NICE35. Developing the new terms

\begin{align*}
\da^{k}(\tilde{c}_{\al} \tilde{\omega}) & = \text{NICE35 } - \tilde{\omega} \da^{k}\left((Q^{2}BR)_{\al} \cdot \frac{\tilde{z}_{\al}}{|\tilde{z}_{\al}|^{2}}\right) \\
& = \text{NICE35 } - \frac{Q^{2} \tilde{\omega} }{|\tilde{z}_{\al}|^{2}} \cdot  \da^{k}\left(\tilde{z}_{\al} \cdot \int \frac{(\tilde{z}_{\al}(\al) - \tilde{z}_{\al}(\beta))^{\perp}}{|\tilde{z}(\al) - \tilde{z}(\beta)|^{2}}\tilde{\omega}(\beta)d\beta\right) \\
& = \text{NICE35 } -\frac{Q^{2} \tilde{\omega}^{2}}{2|\tilde{z}_{\al}|^{4}}\da^{k}\left(\tilde{z}_{\al} \cdot H(\tilde{z}_{\al \al}^{\perp})\right) \\
& = \text{NICE35 } + \frac{Q^{2} \tilde{\omega}^{2}}{2|\tilde{z}_{\al}|} H\left(\da^{k} (\tilde{K})\right). \\
\end{align*}
\end{proof}
\begin{lemma}
$$ \da^{k}(\tilde{c} \tilde{\omega}_{\al}) = \text{NICE35 }, $$
where NICE35 means  $$ \int Q^{j} \Lambda (\da^{k} (\tilde{\omega}) )\text{NICE35} \leq CE_{k}^{p}(t)$$
for some positive constants $C, p$ and any $j$.
\end{lemma}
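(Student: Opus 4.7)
The plan is to expand $\da^k(\tilde{c}\,\tilde{\omega}_\al)$ via Leibniz and to show that each resulting term, once paired against $Q^j\Lambda\da^k\tilde{\omega}$, is bounded by $CE_k^p(t)$. The decisive structural fact I would lean on is that $\tilde{c}$ is one derivative smoother than $\tilde{\omega}$: the explicit formula for $\tilde{c}$ displayed in the introduction shows $\tilde{c}_\al$ has the regularity of $(Q^2 BR(\tilde{z},\tilde{\omega}))_\al\cdot\tilde{z}_\al/|\tilde{z}_\al|^2$, and since $BR$ is an order-zero operator in $\tilde{\omega}$, one has $\tilde{c}\in H^{k+1/2}(\T)$ whenever $\tilde{\omega}\in H^{k-1/2}(\T)$ and $\tilde{z}\in H^{k+2}(\T)$, which is exactly the regularity controlled by $E_k$.

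First I would split
$$
\da^k(\tilde{c}\,\tilde{\omega}_\al)=\tilde{c}\,\da^{k+1}\tilde{\omega}+\sum_{j=1}^{k}\binom{k}{j}\da^j\tilde{c}\cdot\da^{k+1-j}\tilde{\omega},
$$
and dispatch the sum first. Each summand is a product $\da^j\tilde{c}\cdot\da^{k+1-j}\tilde{\omega}$ with $j\geq 1$, whose factors lie in $H^{k+1/2-j}$ and $H^{j-1/2}$ respectively. Since $k\geq 3$, Kato-Ponce product estimates place each such product inside $H^{1/2}(\T)$, with norm polynomially controlled by $\|\tilde{c}\|_{H^{k+1/2}}$ and $\|\tilde{\omega}\|_{H^{k+1/2}}$, hence by $E_k^p(t)$. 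Pairing against $Q^j\Lambda\da^k\tilde{\omega}\in H^{-1/2}$ by $H^{1/2}$-$H^{-1/2}$ duality (with a standard multiplier commutator to absorb the $Q^j$ weight) finishes these contributions.

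The genuinely singular contribution is $\tilde{c}\,\da^{k+1}\tilde{\omega}$. Setting $g=\da^k\tilde{\omega}$ and $\psi=Q^j\tilde{c}$, the task reduces to bounding
$$
I=\int \psi\,\Lambda g\,\da g\,d\al.
$$
I would integrate by parts twice, using $\Lambda\da=\da\Lambda$ and the self-adjointness of $\Lambda$, to reach the symmetrization identity
$$
2I=-\int \psi_\al\,g\,\Lambda g\,d\al-\int [\Lambda,\psi]g\cdot\da g\,d\al.
$$
The first term rewrites as $\int \psi_\al|\Lambda^{1/2}g|^2 d\al+\int [\Lambda^{1/2},\psi_\al]g\cdot\Lambda^{1/2}g\,d\al$ and is bounded by $\|\psi_\al\|_{L^\infty}\|g\|_{H^{1/2}}^2\leq CE_k^p(t)$. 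For the second, the Kato-Ponce/Coifman-Meyer commutator estimate $\|[\Lambda,\psi]g\|_{H^{1/2}}\lesssim \|\psi\|_{H^{k+1/2}}\|g\|_{H^{1/2}}$, paired by $H^{1/2}$-$H^{-1/2}$ duality with $\da g\in H^{-1/2}$, delivers the matching bound.

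The most delicate step is the commutator estimate $\|[\Lambda,\psi]g\|_{H^{1/2}}\lesssim\|\psi\|_{H^{k+1/2}}\|g\|_{H^{1/2}}$, which requires $\psi$ to be at least $C^{1,\alpha}$ for some $\alpha>0$. The hypothesis $k\geq 3$ secures this through the Sobolev embedding $H^{k+1/2}(\T)\hookrightarrow C^{k,\alpha}(\T)$, so $\psi=Q^j\tilde{c}$ has ample smoothness. Everything else is routine bookkeeping with Leibniz and with the definition of \textsc{NICE35}.
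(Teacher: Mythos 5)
Your proposal follows essentially the same route as the paper: identify $\tilde{c}\,\da^{k+1}\tilde{\omega}$ as the only dangerous Leibniz term, then symmetrize the pairing with $\Lambda\da^{k}\tilde{\omega}$ (equivalently $H\da^{k+1}\tilde{\omega}$, since $\Lambda = \da H$) by (anti-)self-adjointness to expose a smoothing commutator, which is exactly what the paper does with the identity $\int Q^{j}H(\da^{k+1}\tilde{\omega})\,\da^{k+1}\tilde{\omega}\,\tilde{c} = \tfrac12\int\da^{k+1}\tilde{\omega}\,[\tilde{c}Q^{j},H]\da^{k+1}\tilde{\omega}$. One small algebraic caveat: from $I=\int\psi\,\Lambda g\,\da g$ the symmetrization actually gives $2I=-\int\psi_{\al}\,g\,\Lambda g+\int g\,[\Lambda,\psi]\da g$ (commutator acting on $\da g$, opposite sign), but since the commutator gains a full derivative this rearrangement yields the same $H^{1/2}$--$H^{-1/2}$ bound, so the argument goes through.
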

\begin{proof}
The most singular term is when we hit all the derivatives in $\tilde{\om}_{\al}$, since if we hit all of them in $\tilde{c}$, that term would belong to NICE35. Thus, we have to estimate

\begin{align*}
\int Q^{j} H \da^{k+1}(\tilde{\omega}) \da^{k+1}(\tilde{\om}) \tilde{c}
& = - \int \da^{k+1}(\tilde{\omega}) H(\da^{k+1}(\tilde{\om}) Q^{j}\tilde{c}) \\
& = \frac{1}{2}\int \da^{k+1}(\tilde{\omega}) \left[H(\da^{k+1}(\tilde{\om})) \tilde{c} Q^{j} - H(\da^{k+1}(\tilde{\om}) \tilde{c} Q^{j})\right]
\leq CE_{k}^{p}(t),
\end{align*}

and therefore it is NICE35.
\end{proof}

\subsection{Calculations of the time derivative of the energy}

Using the previous lemmas and propositions, we can get the following estimates for the derivative of the energy:

\begin{align*}
\frac{dA}{dt} & = \text{OK } +
2\int Q^{2k+1}\left(\da^{k}(\tilde{K})\right)\da^{k}\left(Q^{2}H(\tilde{\omega}_{\al \al}) + 4QQ_{\al}H(\tilde{\omega}_{\al})\right) \\
& = \text{OK } + 2\int 2kQ^{2k+2}Q_{\al}\left(\da^{k}(\tilde{K})\right)\da^{k-1}\left(H(\tilde{\omega}_{\al \al})\right)  \\
& + 2\int Q^{2k+3}\left(\da^{k}(\tilde{K})\right)\da^{k}\left(H(\tilde{\omega}_{\al \al})\right)  \\
& + 2\int 4Q^{2k+2}Q_{\al}\left(\da^{k}(\tilde{K})\right)\da^{k}\left(H(\tilde{\omega}_{\al})\right) \\
& = A^{1} + A^{2} + A^{3},
\end{align*}

where we will say that a term is OK if it is controlled by the energy.

We should be careful while estimating $B_t$ because

\begin{align*}
\frac{dB}{dt} & = \text{OK } + \frac{1}{\tau}\int Q^{2k+2}\da^{k}(\tilde{\omega}_t)\Lambda(\da^{k}(\tilde{\omega}))
+ \frac{1}{\tau}\int Q^{2k+2}\da^{k}(\tilde{\omega})\Lambda(\da^{k}(\tilde{\omega}_t)) \\
& = \text{OK } + \frac{1}{\tau}\int Q^{2k+2}\da^{k}(\tilde{\omega}_t)\Lambda(\da^{k}(\tilde{\omega}))
+ \frac{1}{\tau}\int \Lambda(Q^{2k+2}\da^{k}(\tilde{\omega}))\da^{k}(\tilde{\omega}_t) \\
& = \text{OK } + \frac{2}{\tau}\int Q^{2k+2}\da^{k}(\tilde{\omega}_t)\Lambda(\da^{k}(\tilde{\omega}))
 + \frac{1}{\tau}\int (Q^{2k+2})_{\al}H(\da^{k}(\tilde{\omega}))\da^{k}(\tilde{\omega}_t) \\
\end{align*}

Hence

 \begin{align*}
 \frac{dB}{dt} & = \text{OK } + \frac{2}{\tau}\int Q^{2k+2}\Lambda(\da^{k}(\tilde{\omega}))\frac{Q^{2}\tilde{\omega}^{2}}{2|\tilde{z}_{\al}|}H(\da^{k}(\tilde{K})) \\
& + 2\int Q^{2k+2}\Lambda(\da^{k}(\tilde{\omega}))\da^{k}((Q\tilde{K} + \frac{Q^{3}}{|\tilde{z}_{\al}|^{3}}X)_{\al}) \\
& + \int (2k+2)Q^{2k+1}Q_{\al}H(\da^{k}(\tilde{\omega}))\da^{k+1}((\tilde{K}Q)) \\
& = \text{OK } + \frac{2}{\tau}\int Q^{2k+2}\Lambda(\da^{k}(\tilde{\omega}))\frac{Q^{2}\tilde{\omega}^{2}}{2|\tilde{z}_{\al}|}H(\da^{k}(\tilde{K})) \\
& + 2\int Q^{2k+2}\Lambda(\da^{k}(\tilde{\omega}))\da^{k}((Q\tilde{K})_{\al})
- 2\int Q^{2k+2}Q_{\al}\Lambda(\da^{k}(\tilde{\omega}))\da^{k}(\tilde{K}) \\
& + \int(2k+2) Q^{2k+2}Q_{\al}H(\da^{k}(\tilde{\omega}))\da^{k+1}(\tilde{K}) \\
& = \text{OK } + B^{1} + B^{2} + B^{3} + B^{4}
\end{align*}

$$\frac{dC}{dt}  = \text{OK } + \frac{1}{|\tilde{z}_{\al}|\tau}\int Q^{2k+4}\tilde{\omega}^{2}\da^{k}(\tilde{\omega})\da^{k+1}(\tilde{K}) =
\text{OK } + C^{1}$$

\subsection{Development of the derivative in $B$}

%
%
%
%


We start from the development of $B^1$, $B^2$, $B^3$ and $B^{4}$. We trivially have:

\begin{align*}
B^1 & = \frac{1}{\tau}\int Q^{2k+2}\Lambda(\da^{k}(\tilde{\omega}))\frac{Q^{2}\tilde{\omega}^{2}}{|\tilde{z}_{\al}|}H(\da^{k}(\tilde{K})) \\
B^3 & = - 2\int Q^{2k+2}Q_{\al}\Lambda(\da^{k}(\tilde{\omega}))\da^{k}(\tilde{K}) \\
B^{4} & =  \text{OK } -\int(2k+2) Q^{2k+2}Q_{\al}H(\da^{k+1}(\tilde{\omega}))\da^{k}(\tilde{K})
\end{align*}

We now look at $B^{2}$. We can decompose it in the following way
\begin{align*}
B^{2} & =  2\int Q^{2k+2}\Lambda(\da^{k}(\tilde{\omega}))\da^{k}(Q_{\al}\tilde{K} + Q\tilde{K}_{\al}) \\
& = \text{OK } + 2\int Q^{2k+2}\Lambda(\da^{k}(\tilde{\omega}))(Q_{\al}\da^{k}(\tilde{K}) + Q\da^{k+1}(\tilde{K}) + kQ_{\al}\da^{k}(\tilde{K})) \\
& = \text{OK } + B^{2,1} + B^{2,2} + B^{2,3}
\end{align*}

We can write down the terms $B^{2,1}$ and $B^{2,3}$ in the form
\begin{align*}
B^{2,1} & =  2\int Q^{2k+2}H(\da^{k+1}(\tilde{\omega}))Q_{\al}\da^{k}(\tilde{K}) \\
B^{2,3} & =  2k\int Q^{2k+2}H(\da^{k+1}(\tilde{\omega}))Q_{\al}\da^{k}(\tilde{K})
\end{align*}

Integrating by parts in $B^{2,2}$ we establish

\begin{align*}
B^{2,2} & = -2\int Q^{2k+3}\Lambda(\da^{k+1}(\tilde{\omega}))\da^{k}(\tilde{K}) \\
& -2(2k+3)\int Q^{2k+2}Q_{\al}\Lambda(\da^{k}(\tilde{\omega}))\da^{k}(\tilde{K}) \\
& = B^{2,2,1} + B^{2,2,2}
\end{align*}

Again, $B^{2,2,2}$ can easily be reduced to the canonical form
\begin{align*}
B^{2,2,2} & = -2(2k+3)\int Q^{2k+2}Q_{\al}H(\da^{k+1}(\tilde{\omega}))\da^{k}(\tilde{K})
\end{align*}

%
%
%
%
%

%

%
%

\subsection{Collection of the terms}

We will split all the uncontrolled terms into three categories: high order and low order types I and II and we will see that the sum of the terms in each category adds up to  low enough order terms, denoted by OK. 

\subsubsection{High Order}
\underline{From $A$:}
\begin{equation*}
\begin{array}{lr}
\displaystyle 2\int Q^{2k+3}\left(\da^{k}(\tilde{K})\right)\da^{k}\left(H(\tilde{\omega}_{\al \al})\right) & (A^{2})
\end{array}
\end{equation*}
\underline{From $B$:}
\begin{equation*}
\begin{array}{lr}
\displaystyle -2\int Q^{2k+3}\Lambda(\da^{k+1}(\tilde{\omega}))\da^{k}(\tilde{K}) & (B^{2,2,1})
\end{array}
\end{equation*}
\underline{From $C$:}

No terms from $C$.

\subsubsection{Low Order Type I}
\underline{From $A$:}
\begin{equation*}
\begin{array}{lr}
\displaystyle 2\int 2kQ^{2k+2}Q_{\al}\left(\da^{k}(\tilde{K})\right)\da^{k-1}\left(H(\tilde{\omega}_{\al \al})\right) & (A^{1}) \\
\displaystyle 2\int 4Q^{2k+2}Q_{\al}\left(\da^{k}(\tilde{K})\right)\da^{k}\left(H(\tilde{\omega}_{\al})\right) & (A^{3}) \\
\end{array}
\end{equation*}
\underline{From $B$:}
\begin{equation*}
\begin{array}{lr}
\displaystyle - 2\int Q^{2k+2}Q_{\al}\Lambda(\da^{k}(\tilde{\omega}))\da^{k}(\tilde{K}) & (B^{3}) \\
\displaystyle -\int(2k+2) Q^{2k+2}Q_{\al}H(\da^{k+1}(\tilde{\omega}))\da^{k}(\tilde{K}) & (B^{4}) \\
\displaystyle 2\int Q^{2k+2}H(\da^{k+1}(\tilde{\omega}))Q_{\al}\da^{k}(\tilde{K})) & (B^{2,1})\\
\displaystyle 2k\int Q^{2k+2}H(\da^{k+1}(\tilde{\omega}))Q_{\al}\da^{k}(\tilde{K}) & (B^{2,3}) \\
\displaystyle -2(2k+3)\int Q^{2k+2}Q_{\al}H(\da^{k+1}(\tilde{\omega}))\da^{k}(\tilde{K}) & (B^{2,2,2}) \\
\end{array}
\end{equation*}
\underline{From $C$:}

No terms from $C$.

\subsubsection{Low Order Type II}
\underline{From $A$:}

No terms from $A$.

\underline{From $B$:}

\begin{equation*}
\begin{array}{lr}
\displaystyle \frac{1}{\tau}\int Q^{2k+2}\Lambda(\da^{k}(\tilde{\omega}))\frac{Q^{2}\tilde{\omega}^{2}}{|\tilde{z}_{\al}|}H(\da^{k}(\tilde{K}))
& (B^{1})
\end{array}
\end{equation*}

\underline{From $C$:}

\begin{equation*}
\begin{array}{lr}
\displaystyle \frac{1}{|\tilde{z}_{\al}|\tau}\int Q^{2k+4}\tilde{\omega}^{2}\da^{k}(\tilde{\omega})\da^{k+1}(\tilde{K}) & (C^{1})
\end{array}
\end{equation*}

%
%
%
%
%
%
%
%

\subsection{Regularized system}

Now, let $\tilde{z}^{\ep,\delta,\mu} (\al,t)$ be a solution of the following system (compare with (\ref{zeq} - \ref{eqomega})):
\begin{align}
\label{epsdelmuzt}
\begin{split}\tilde{z}^{\ep,\delta,\mu}_t(\al,t)&=\phi_{\delta} * \phi_{\delta} * \left(Q^2(\tilde{z}^{\ep,\delta,\mu})BR(\tilde{z}^{\ep,\delta,\mu},\tilde{\om}^{\ep,\delta,\mu})\right)(\al,t)+\phi_{\mu} * \left(\tilde{c}^{\ep,\delta,\mu}\left(\phi_{\mu} * \da \tilde{z}^{\ep,\delta,\mu}\right)\right)(\al,t),
\end{split}
\end{align}

\begin{align}
\label{epsdelmuwt}
\tilde{\omega}&^{\ep,\delta,\mu}_{t}=\phi_{\delta} * \phi_{\delta} *\left(-2 BR_t(\tilde{z}^{\ep,\delta,\mu},\tilde{\omega}^{\ep,\delta,\mu}) \cdot \tilde{z}^{\ep,\delta,\mu}_{\al}- |BR(\tilde{z}^{\ep,\delta,\mu},\tilde{\omega}^{\ep,\delta,\mu})|^{2} (Q^{2}(\tilde{z}^{\ep\delta,\mu}))_{\al}\right. \nonumber  \\
 &- \Big(\frac{Q^2\tilde{(\omega^{\ep,\delta,\mu})^2}}{4|\tilde{z}^{\ep,\delta,\mu}_{\al}|^{2}}\Big)_{\al}
+ 2\overline{c}^{\ep,\delta,\mu} BR_\al(\tilde{z}^{\ep,\delta,\mu},\omega^{\ep,\delta,\mu}) \cdot \tilde{z}^{\ep,\delta,\mu}_{\al} + \left(\overline{c}^{\ep,\delta,\mu}\tilde{\omega}^{\ep,\delta,\mu}\right)_{\al}- 2 \left(P^{-1}_2(\tilde{z}^{\ep,\delta,\mu}(\al,t))\right)_{\al} \nonumber \\
&+\tau \left( \frac{Q^{3}(\tilde{z}^{\ep,\delta,\mu})}{|\tilde{z}^{\ep,\delta,\mu}_{\al}(\al,t)|^{3}}
(\tilde{z}^{\ep,\delta,\mu}_{\al})^{T}HP_{2}^{-1} \tilde{z}^{\ep,\delta,\mu}_{\al}\nabla P_{1}^{-1}\cdot \tilde{z}^{\ep,\delta,\mu}_{\al}
-(\tilde{z}^{\ep,\delta,\mu}_{\al})^{T}HP_{1}^{-1} \tilde{z}^{\ep,\delta,\mu}_{\al}\nabla P_{2}^{-1} \cdot \tilde{z}^{\ep,\delta,\mu}_{\al})\right)_{\al}
\nonumber\\
&\left.+\tau\left(Q\frac{\tilde{z}^{\ep,\delta,\mu}_{\al\al}\cdot (\tilde{z}^{\ep,\delta,\mu}_{\al})^{\bot}}{|\tilde{z}^{\ep,\delta,\mu}_{\al}|^3}\right)_{\al}\right)-  \ep\phi_\mu*\phi_\mu*\left(\Lambda (\tilde{\omega}^{\ep,\delta,\mu})\frac{1}{Q^{2k+3}}\right)
\end{align}
$\tilde{z}^{\ep,\delta,\mu}(\al,0)=\tilde{z}_0(\al)$ and $\tilde{\om}^{\ep,\delta,\mu}(\al,0)=\tilde{\om}_0(\al)$ for $\ep>0$, $ \delta > 0, \mu > 0$. The functions $\phi_{\delta}$ and $\phi_{\mu}$ are even mollifiers,

\begin{align*}
\tilde{c}^{\ep,\delta,\mu}(\al)=&\frac{\al+\pi}{2\pi}\int_{-\pi}^\pi\frac{\partial_{\beta} \tilde{z}^{\ep,\delta,\mu}(\be))}{|\partial_{\beta}
\tilde{z}^{\ep,\delta,\mu}(\be)|^2}\cdot \phi_{\delta} * \phi_{\delta} * (\partial_{\beta} (Q^2(\tilde{z}^{\ep,\delta,\mu})(\be) BR(\tilde{z}^{\ep,\delta,\mu},\tilde{\om}^{\ep,\delta,\mu}))(\be)) d\be \\
& -\int_{-\pi}^\al
\frac{\partial_{\beta} \tilde{z}^{\ep,\delta,\mu}(\beta)}{|\partial_{\beta} \tilde{z}^{\ep,\delta,\mu}(\beta)|^2}\cdot\phi_{\delta} * \phi_{\delta} * (\partial_{\beta} (Q^2(\tilde{z}^{\ep,\delta,\mu})(\beta)
BR(\tilde{z}^{\ep,\delta,\mu},\tilde{\om}^{\ep,\delta,\mu}))(\beta)) d\beta,
\end{align*}
and
\begin{align*}
\overline{c}^{\ep,\delta,\mu}(\al)=&\frac{\al+\pi}{2\pi}\int_{-\pi}^\pi\frac{\partial_{\beta} \tilde{z}^{\ep,\delta,\mu}(\be))}{|\partial_{\beta}
\tilde{z}^{\ep,\delta,\mu}(\be)|^2}\cdot  (\partial_{\beta} (Q^2(\tilde{z}^{\ep,\delta,\mu})(\be) BR(\tilde{z}^{\ep,\delta,\mu},\tilde{\om}^{\ep,\delta,\mu}))(\be)) d\be \\
& -\int_{-\pi}^\al
\frac{\partial_{\beta} \tilde{z}^{\ep,\delta,\mu}(\beta)}{|\partial_{\beta} \tilde{z}^{\ep,\delta,\mu}(\beta)|^2}\cdot (\partial_{\beta} (Q^2(\tilde{z}^{\ep,\delta,\mu})(\beta)
BR(\tilde{z}^{\ep,\delta,\mu},\tilde{\om}^{\ep,\delta,\mu}))(\beta)) d\beta,
\end{align*}

The RHS of the evolution equations for $\tilde{z}^{\ep,\delta,\mu}$ and $\tilde{\omega}^{\ep,\delta,\mu}$ are Lipschitz in the spaces $H^{k+2}(\T)$ and $H^{k+\frac{1}{2}}(\T)$ since they are mollified.  Therefore we can solve (\ref{epsdelmuzt}-\ref{epsdelmuwt}) for short time, thanks to Picard's theorem.

Now, we can perform energy estimates  to get uniform bounds in $\mu$ (we just deal with a transport term and a dissipative) and we can let $\mu$ go to zero. The energy estimates that we can get are the following:

\begin{align*}
& \frac{d}{dt}\left(\|\tilde{z}^{\ep,\delta,\mu}\|^2_{H^5}
+\|\F(\tilde{z}^{\ep,\delta,\mu})\|^2_{L^\infty}+\|\tilde{\om}^{\ep,\delta,\mu}\|^2_{H^{3+\frac12}}+\sum_{l=0}^{4}\frac{1}{m^{\ep,\delta,\mu}(q^l)}\right)(t)\\
&\leq
C(\delta)\left(\|\tilde{z}^{\ep,\delta,\mu}\|^2_{H^5}+\|\F(\tilde{z}^{\ep,\delta,\mu})\|^2_{L^\infty}+
\|\tilde{\om}^{\ep,\delta,\mu}\|^2_{H^{3+\frac12}}+\sum_{l=0}^4\frac{1}{m^{\ep,\delta,\mu}(q^l)}\right)^j(t).
\end{align*}
We should note that for the new system without the $\phi_\mu$ mollifier, the length of the tangent vector $|\da \tilde{z}^{\delta}|$ is now constant in space and depends only on time. Next we will perform energy estimates as in the previous case by using the curvature $\tilde{K}^{\delta}$ from the curve $\tilde{z}^{\delta}$.

Similarly, we get (let us omit the superscript $\delta,\varepsilon$ in $\tilde{z}^{\delta,\varepsilon}$ and $\tilde{\om}^{\delta,\varepsilon}$)
\begin{itemize}
\item $$ \tilde{K}_t = \text{NICE3 } + \frac{Q^{2}}{2|\tilde{z}_{\al}|^{3}}\co H(\tilde{\omega}_{\al \al} )+ \frac{1}{|\tilde{z}_{\al}|^{3}}(Q^{2})_{\al}\co H(\tilde{\omega}_{\al}),$$
\item $$ \da^{k}(\overline{c}_{\al} \tilde{\omega}) = \text{NICE35 } + \frac{Q^{2}\tilde{\omega}^{2}}{2|\tilde{z}_{\al}|}H(\da^{k}(\tilde{K})),$$
 \item  $$ \da^{k}(\tilde{c} \tilde{\omega}_{\al}) = \text{NICE35 }, $$
\end{itemize}
and the following collection of terms:
\subsubsection{High Order}
\underline{From $A$:}
\begin{equation*}
\begin{array}{lr}
\displaystyle 2\int Q^{2k+3}\left(\da^{k}(\tilde{K})\right)\da^{k}\co\left(H(\tilde{\omega}_{\al \al})\right) & (A^{2})
\end{array}
\end{equation*}
\underline{From $B$:}
\begin{equation*}
\begin{array}{lr}
\displaystyle -2\int Q^{2k+3}\Lambda(\da^{k+1}(\tilde{\omega}))\co\da^{k}(\tilde{K}) & (B^{2,2,1}) \\
\displaystyle -2\frac{\ep}{\tau}\|\da^{k+1} \tilde{\omega}\|^{2}_{L^{2}} & (D)
\end{array}
\end{equation*}
\underline{From $C$:}

No terms from $C$.

\subsubsection{Low Order Type I}
\underline{From $A$:}
\begin{equation*}
\begin{array}{lr}
\displaystyle 2\int 2kQ^{2k+2}Q_{\al}\left(\da^{k}(\tilde{K})\right)\da^{k-1}\co\left(H(\tilde{\omega}_{\al \al})\right) & (A^{1}) \\
\displaystyle 2\int 4Q^{2k+2}Q_{\al}\left(\da^{k}(\tilde{K})\right)\co\da^{k}\left(H(\tilde{\omega}_{\al})\right) & (A^{3}) \\
\end{array}
\end{equation*}
\underline{From $B$:}
\begin{equation*}
\begin{array}{lr}
\displaystyle - 2\int Q^{2k+2}Q_{\al}\Lambda(\da^{k}(\tilde{\omega}))\co\da^{k}(\tilde{K}) & (B^{3}) \\
\displaystyle -\int(2k+2) Q^{2k+2}Q_{\al}H(\da^{k+1}(\tilde{\omega}))\co\da^{k}(\tilde{K}) & (B^{4}) \\
\displaystyle 2\int Q^{2k+2}H(\da^{k+1}(\tilde{\omega}))Q_{\al}\co\da^{k}(\tilde{K})) & (B^{2,1})\\
\displaystyle 2k\int Q^{2k+2}H(\da^{k+1}(\tilde{\omega}))Q_{\al}\co\da^{k}(\tilde{K}) & (B^{2,3}) \\
\displaystyle -2(2k+3)\int Q^{2k+2}Q_{\al}H(\da^{k+1}(\tilde{\omega}))\co\da^{k}(\tilde{K}) & (B^{2,2,2}) \\
\end{array}
\end{equation*}
\underline{From $C$:}

No terms from $C$.

\subsubsection{Low Order Type II}

\underline{From $A$:}

No terms from $A$.

\underline{From $B$:}

\begin{equation*}
\begin{array}{lr}
\displaystyle \frac{1}{\tau}\int Q^{2k+2}\Lambda(\da^{k}(\tilde{\omega}))\frac{Q^{2}\tilde{\omega}^{2}}{|\tilde{z}_{\al}|}\co H(\da^{k}(\tilde{K}))
& (B^{1})
\end{array}
\end{equation*}

\underline{From $C$:}

\begin{equation*}
\begin{array}{lr}
\displaystyle \frac{1}{|\tilde{z}_{\al}|\tau}\int Q^{2k+4}\tilde{\omega}^{2}\da^{k}(\tilde{\omega})\co\da^{k+1}(\tilde{K}) & (C^{1})
\end{array}
\end{equation*}

We note that throughout this section we have repeatedly used the following commutator estimate for convolutions:

\begin{equation}
\label{commmol}
 \|\phi_{\delta} * (\da f g) - g \phi_{\delta} * (\da f)\|_{L^{2}} \leq C\|\da g\|_{L^{\infty}} \|f\|_{L^{2}},
\end{equation}

where the constant $C$ is independent of $\delta, f$ and $g$.

Also using this commutator estimate we can find all the cancelations we need in the previous collection of terms of low order type I and II to obtain a suitable energy estimate.

Regarding the high order terms, we will do the estimates in detail. We will see the need for the dissipative term since there are terms that escape for half of a derivative.

\begin{align*}
A^{2} + B^{2,2,1} + D & =
2\int Q^{2k+3}\left(\da^{k}(\tilde{K})\right)\co\left(H(\da^{k+2} \tilde{\omega})\right) \\
& -2\int Q^{2k+3}H(\da^{k+2}(\tilde{\omega}))\co\da^{k}(\tilde{K})
-2\ep\|\da^{k+1} \tilde{\omega}\|^{2}_{L^{2}} \\
& = 2 \int \da^{k}(\tilde{K})\left(Q^{2k+3} \co H(\da^{k+2} \tilde{\omega}) - \co \left(Q^{2k+3} H (\da^{k+2} \tilde{\omega})\right)\right)
-2\ep\|\da^{k+1} \tilde{\omega}\|^{2}_{L^{2}} \\
& \leq \|\da^{k} \tilde{K}\|_{L^{2}}\|\da Q^{2k+3}\|_{L^{\infty}}\|\da^{k+1}\tilde{\omega}\|_{L^{2}} -2\ep\|\da^{k+1} \tilde{\omega}\|^{2}_{L^{2}}
 \leq C(\ep)E^{p}(t),
\end{align*}

which is uniform in $\delta$. This proves that we can pass to the limit $\delta \to 0$.

Finally, by applying the a priori energy estimates to the new system (which only depend on $\ep$) we can pass to the limit $\ep\to 0$ since now we don't have the previous problems and $A^{2} + B^{2,2,1} = 0$.

\section{Energy with the Rayleigh-Taylor condition}
\label{SectionEnergyRT}

In this section, we prove local existence in the tilde domain, where the time of existence does not depend on the surface tension coefficient. In this theorem, we need initial data to satisfy the Rayleigh-Taylor condition as we explain in Section \ref{SectionInitialData}. This Rayleigh-Taylor condition will hold in particular if the surface tension coefficient is small enough.

\begin{theorem}
\label{localexistencetilde}
Let $k \geq 3$. Let $\tilde{z}^{0}(\al)$ be the image of a splash curve by the map $P$ parametrized in such a way that $|\da \tilde{z}^{0}(\al)| = \frac{L}{2\pi}$, where $L$ is the length of the curve in a fundamental period, and such that $\tilde{z}_{1}^{0}(\al), \tilde{z}_2^{0}(\al) \in H^{k+2}(\T)$. Let $\tilde{\varphi}(\al,0) \in H^{k+\frac{1}{2}}(\T)$ be as in \eqref{varphi} and let $\tilde{\omega}(\al,0) \in H^{k-1}(\T)$.
Then there exist a finite time $T > 0$, a time-varying curve $\tilde{z}(\al,t)  \in C([0,T];H^{k+2})$, and functions $\tilde{\omega}(\al,t) \in C([0,T];H^{k-1})$ and $\tilde{\varphi} \in C([0,T]; H^{k+\frac{1}{2}})$ providing a solution of the water wave equations (\ref{zeq} - \ref{eqomega}).
Assume that initially, the Rayleigh-Taylor condition is strictly positive.
\end{theorem}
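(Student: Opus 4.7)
The plan is to mirror the regularization scheme of Section \ref{SectionEnergyWRT}, but with a $\tau$-independent energy that replaces the $1/\tau$ and $1/\tau^2$ weighted contributions in $E_k^2(t)$ of Subsection \ref{primeraenergia} by a Rayleigh-Taylor weighted term. Specifically, the natural energy here is of Ambrose--Masmoudi type in the tilde domain:
\begin{align*}
\mathcal{E}_k^2(t) &= \mathcal{EE}^2(t) + \|\tilde{\omega}\|_{H^{k-1}}^2 + \int_{-\pi}^{\pi} \sigma(\al,t)\bigl(\Lambda^{k+\frac12}\tilde{\varphi}\bigr)^2 d\al \\
&\quad + \tau \int_{-\pi}^{\pi} Q^{2k+1}\bigl(\da^{k}\tilde{K}\bigr)^2 d\al + \int_{-\pi}^{\pi} Q^{2k+2}\,\da^{k-\frac12}\tilde{\varphi}\,\Lambda\bigl(\da^{k-\frac12}\tilde{\varphi}\bigr) d\al,
\end{align*}
together with the arc-chord and distance-to-singular-points control from $\mathcal{EE}$. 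The two crucial features are that the curvature is weighted by $\tau$ (so it disappears as $\tau\to 0$), and that the top-order derivative of $\tilde{\varphi}$ is weighted by $\sigma$ (so it carries the classical gravity water-wave estimate when surface tension is switched off). The variable $\tilde{\varphi}$ is preferred over $\tilde{\omega}$ because, as in \cite{Beale-Hou-Lowengrub:growth-rates-linearized,Ambrose-Masmoudi:zero-surface-tension-3d-waterwaves}, its evolution is governed by an equation in which the Rayleigh-Taylor function appears multiplying the highest-order term $\Lambda\tilde{\varphi}$.

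First I would derive the evolution equation for $\tilde{\varphi}$ from \eqref{varphi}, \eqref{zeq} and \eqref{eqomega}, using the curvature reformulation from Section \ref{SectionCurvature} to recast the surface tension term as $\tau(Q\tilde{K})_\al$ plus lower-order pieces (the $M_1,M_2$ in the notation of that section). The leading structure of $\partial_t\tilde{\varphi}$ will then be $-\sigma\,|\tilde{z}_\al|^{-1}\partial_\al(H\tilde{\varphi}) + \tau\,(Q\tilde{K})_\al/|\tilde{z}_\al| + \text{l.o.t.}$, from which the classical symmetrization $\tfrac{d}{dt}\!\int \sigma(\Lambda^{k+\frac12}\tilde{\varphi})^2$ reproduces the good sign when $\sigma>0$, and the cross term between the gravity/geometric part of $\sigma$ and the $\tau$-curvature part cancels against $\tfrac{d}{dt}\!\int \tau Q^{2k+1}(\da^k\tilde{K})^2$ as in Ambrose--Masmoudi \cite{Ambrose-Masmoudi:zero-surface-tension-2d-waterwaves}. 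The NICE3/NICE35 machinery of Section \ref{SectionEnergyWRT} is reused verbatim for the Birkhoff--Rott, $\tilde{c}$, $Q$ and conformal-derivative lower-order contributions.

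Next I would run exactly the regularized system (\ref{epsdelmuzt}--\ref{epsdelmuwt}) with the same $\phi_\mu,\phi_\delta,\varepsilon$ mollifications and obtain, by Picard's theorem, short-time existence in $H^{k+2}\times H^{k+\frac12}$ for the regularized pair $(\tilde{z}^{\varepsilon,\delta,\mu},\tilde{\omega}^{\varepsilon,\delta,\mu})$. I would then perform the energy estimate for $\mathcal{E}_k^2$ on the regularized system, checking that the commutator estimate \eqref{commmol} still handles every mollifier discrepancy, and that the sign of $\sigma$ is preserved on a small time interval by a bootstrap: $\sigma(\al,0)\geq\sigma_0>0$ and $\partial_t\sigma$ is controlled by $\mathcal{E}_k$ and thus stays bounded, so $\sigma(\al,t)\geq \sigma_0/2$ for $t\leq T$ depending only on the initial energy and on $\sigma_0$, uniformly in $\tau$. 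The limits $\mu\to 0$, $\delta\to 0$, $\varepsilon\to 0$ proceed exactly as before, the key point being that no negative power of $\tau$ appears in the estimate, so $T$ is independent of $\tau$.

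The main obstacle is the cancellation between the top-order surface-tension contribution $\tau(Q\tilde{K})_\al/|\tilde{z}_\al|$ in $\partial_t\tilde{\varphi}$ and the top-order gravity contribution $\sigma\Lambda\tilde{\varphi}/|\tilde{z}_\al|$: after symmetrization these produce dangerous cross terms of the form $\tau\int \sigma\,\Lambda^{k+\frac12}\tilde{\varphi}\,\Lambda^{k+\frac12}((Q\tilde{K})_\al/|\tilde{z}_\al|)$ and $\tau\int Q^{2k+1}\da^k\tilde{K}\,\da^k(\sigma\Lambda\tilde{\varphi})$, which both carry half a derivative more than the energy controls. The plan is to exploit the factor $\tau$ and the time-derivative term $\tfrac12\int\sigma_t(\Lambda^{k+\frac12}\tilde{\varphi})^2$ to integrate one of them by parts in time, following the bookkeeping of \cite{Ambrose-Masmoudi:zero-surface-tension-3d-waterwaves}, so that the net contribution is bounded by $\mathcal{E}_k^p$ with a constant independent of $\tau$. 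Once this cancellation is verified, the resulting differential inequality $\tfrac{d}{dt}\mathcal{E}_k^2 \leq C\mathcal{E}_k^p$ yields a $\tau$-uniform time $T>0$ and continuity in $t$ with values in the stated Sobolev spaces, completing the proof.
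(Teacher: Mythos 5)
Your proposal takes the same broad strategy (replace the $1/\tau$ and $1/\tau^2$ weighted terms by $\tau$-weighted curvature plus a Rayleigh-Taylor weighted $\tilde{\varphi}$ term, prove $\tau$-independent energy bounds), but the execution deviates from the paper's proof in several substantive places, and one of those deviations is a genuine gap.

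First, the paper does \emph{not} re-run the regularization scheme. It explicitly takes the solutions already produced by Theorem~\ref{localexistencetildeWRT}, shows via Lemma~\ref{lemmaequivenergies} that the new energy is equivalent to the old one for fixed $\tau>0$, and then performs an \emph{a priori} estimate with the new energy to extend those solutions to a $\tau$-uniform time. Re-running the full $\mu,\delta,\varepsilon$ scheme is not wrong, but it is unnecessary and re-opens commutator bookkeeping that the paper deliberately avoids at this stage.

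Second, and more importantly, your choice of energy is structurally different from the paper's, and you have not verified that the top-order cancellations close. You place the Rayleigh-Taylor weight $\sigma$ on $\tilde{\varphi}$ at order $k+\tfrac12$, i.e.\ $\int\sigma(\Lambda^{k+1/2}\tilde{\varphi})^2$. The paper instead keeps the top-order $\tilde{\varphi}$ term \emph{unweighted}, $B=\int Q^{2k-2}\da^k\tilde{\varphi}\,\Lambda(\da^k\tilde{\varphi})$, and puts $\sigma$ on the curvature at the lower order $k-1$, namely $E=|\tilde{z}_\al|^2\int\sigma Q^{2k}(\da^{k-1}\tilde{K})^2$. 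The cancellation $\frac{dE}{dt}=\text{OK}-B^{2,2}$ against the $\sigma\tilde{K}$ term that $\partial_t\tilde{\varphi}$ produces, and the pair $\frac{dC}{dt}+\frac{dD}{dt}+A^2+B^{2,1}=\text{OK}$, are the mechanism that closes the estimate \emph{in space}, and they rely on the two extra terms $C$ (a $\tau$-weighted, $\mathcal{C}\|\tilde{K}\|_{H^1}+\tilde{K}$ weighted curvature term at order $k-\tfrac12$) and $D$ (a $\mathcal{C}\|\tilde{K}\|_{H^1}$ weighted $\tilde{\varphi}$ term at order $k$) as well as the quantity $|\tilde{z}_\al|^2/m(Q^{2k}\sigma)$, none of which appear in your proposed energy. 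Your alternative — handle the dangerous half-derivative excess in the cross terms between $\tau(Q\tilde{K})_\al$ and $\sigma\Lambda\tilde{\varphi}$ by ``integrating by parts in time'' and invoking $\tfrac12\int\sigma_t(\Lambda^{k+1/2}\tilde{\varphi})^2$ — is asserted but not verified. The term $\sigma_t$ contains $BR_t(\tilde{z},\tilde{\omega})\cdot\tilde{z}_\al^\perp$ and $\tilde{z}_{\al t}$ contributions at the same order as what you are trying to absorb, so it is not clear that the bookkeeping closes with a constant independent of $\tau$; in the paper this is precisely the role of the extra terms $C,D$. Without either a verified time-integration-by-parts argument or the paper's extra energy terms, the estimate does not close, which is a genuine gap in the proposal.

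Finally, you replace the paper's built-in positivity control $|\tilde{z}_\al|^2/m(Q^{2k}\sigma)(t)$ by an informal bootstrap on $\sigma$. That is a plausible substitute, but it should be formulated quantitatively (as a differential inequality in $1/m(Q^{2k}\sigma)$) so it can be fed into the energy estimate; the paper does this explicitly by including it in $E_k^2$.
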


In order to prove this theorem we will use the solutions we have obtained in  theorem \ref{localexistencetildeWRT} for $\tau>0$. We will perform energy estimates on these solutions.

\subsection{The energy}

We will define the energy for $k \geq 3$ as

\begin{align*}
 E^2_{k}(t) & = \mathcal{EE}^{2}(t) + \underbrace{\tau \frac{|\tilde{z}_{\al}|}{2}\int Q^{2k+1}\left(\da^{k}(\tilde{K})\right)^{2}}_{A}
+ \underbrace{\int Q^{2k-2}\da^{k}(\tilde{\vp})\Lambda(\da^{k}(\tilde{\vp}))}_{B} \\
& + \underbrace{|\tilde{z}_{\al}|^{2} \tau \int (\mathcal{C}\|\tilde{K}(t)\|_{H^{1}} + \tilde{K}) Q^{2k+1}\da^{k-1}(\tilde{K})\Lambda(\da^{k-1}(\tilde{K}))}_{C}
+ \underbrace{2|\tilde{z}_{\al}|  \int \mathcal{C}\|\tilde{K}(t)\|_{H^{1}} Q^{2k-2}\left(\da^{k}(\tilde{\vp})\right)^{2}}_{D} \\
& + \underbrace{|\tilde{z}_{\al}|^{2} \int \sigma Q^{2k}\left(\da^{k-1}(\tilde{K})\right)^{2}}_{E}
 + \frac{|\tilde{z}_{\al}|^{2}}{m(Q^{2k}\sigma)(t)},
\end{align*}
where $m(Q^{2k}\sigma)=\min_{\al\in\T}Q^{2k}(\tilde{z}(\al,t))\sigma(\al,t)$ and $\mathcal{C}$ is a sufficiently large constant such that $C$ is strictly positive.  Remember that $\tilde{\varphi}$ was introduced in Equation \ref{varphi}.

At this point is important to notice the following.

\begin{lemma}The following sentences hold.
\label{lemmaequivenergies}
\begin{enumerate}
\item Let $\tilde{\varphi}\in H^{3+\frac{1}{2}}$, $\tilde{\om}\in H^{2}$ and $z\in H^k$ with $k\geq 4$. Then $\tilde{\om}\in H^3$.
\item Let $\tilde{\varphi}\in H^{3+\frac{1}{2}}$, $\tilde{\om}\in H^{3}$ and $z\in H^k$ with $k\geq 5$. Then $\tilde{\om}\in H^{3.5}$.
\item Let $\tilde{\omega}\in H^{3+\frac{1}{2}}$,  and $\tilde{z}\in H^k$ with $k\geq 5$. Then $\tilde{\varphi}\in H^{3.5}$.
\end{enumerate}
\end{lemma}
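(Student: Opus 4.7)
The plan is to exploit the defining algebraic identity
$$\tilde{\varphi}=\frac{Q^{2}\tilde{\omega}}{2|\tilde{z}_{\alpha}|}-\tilde{c}\,|\tilde{z}_{\alpha}|,\qquad \tilde{\omega}=\frac{2|\tilde{z}_{\alpha}|}{Q^{2}}\bigl(\tilde{\varphi}+\tilde{c}\,|\tilde{z}_{\alpha}|\bigr),$$
together with the fact that, because of the chosen tangential reparametrization, $|\tilde{z}_{\alpha}|$ is a constant in $\alpha$ (it depends only on $t$), so multiplication by it is harmless in every Sobolev space. The strategy breaks into three ingredients that I will assemble in each of the three cases.

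First, a composition estimate: because $P$ (and hence $\tfrac{dP}{dw}$) is real-analytic on a neighborhood of the tilde domain and does not vanish there, the function $Q^{2}(\tilde z)$ depends smoothly on $\tilde z$, and the usual Moser-type composition estimate gives $\|Q^{2}\|_{H^{s}}\le C(\|\tilde z\|_{L^{\infty}})(1+\|\tilde z\|_{H^{s}})$ together with the same bound for $1/Q^{2}$ (which uses $Q>0$ on the water region). Second, a Sobolev-product estimate: $H^{s}(\T)$ is an algebra for $s>1/2$, so whenever two factors are in $H^{s}$ with $s>1/2$ the product is in $H^{s}$, and in fact one gains the regularity of the smoother factor provided the other factor is above the algebra threshold. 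Third, and crucially, a regularity statement for $\tilde c$: I claim that if $\tilde\omega\in H^{s}$ and $\tilde z\in H^{k}$ with $k\ge s+1$, then $\tilde c\in H^{s+1}$. This is the gain of one derivative, and it is the only nontrivial point. It comes from the decomposition
$$BR(\tilde z,\tilde\omega)=\tfrac{1}{2}\tfrac{\tilde z_{\alpha}^{\perp}}{|\tilde z_{\alpha}|^{2}}H(\tilde\omega)+\mathcal R,$$
where $\mathcal R$ is as regular as $\tilde z$, combined with the orthogonality cancellation: when one computes $\partial_{\beta}(Q^{2}BR)\cdot \tilde z_{\beta}/|\tilde z_{\beta}|^{2}$, the only would-be top-order contribution is proportional to $H(\tilde\omega_{\alpha})\,\tilde z_{\alpha}^{\perp}\cdot \tilde z_{\alpha}=0$. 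Hence the integrand defining $\tilde c$ lies in $H^{s}$, and one more derivative is gained by the explicit integration in the definition of $\tilde c$. Establishing this cancellation cleanly is the step I expect to require the most care; everything else is bookkeeping.

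With these tools in hand, I treat each item. For (1), $\tilde\omega\in H^{2}$ and $\tilde z\in H^{4}$ give $\tilde c\in H^{3}$, whence $\tilde c|\tilde z_{\alpha}|\in H^{3}$; the sum $\tilde\varphi+\tilde c|\tilde z_{\alpha}|$ lies in $H^{3}\cap H^{3+1/2}=H^{3}$, and multiplying by the factor $2|\tilde z_{\alpha}|/Q^{2}\in H^{4}$ preserves $H^{3}$, giving $\tilde\omega\in H^{3}$. For (2), upgrading the hypothesis to $\tilde\omega\in H^{3}$, $\tilde z\in H^{5}$ yields $\tilde c\in H^{4}$, so $\tilde c|\tilde z_{\alpha}|\in H^{4}$ and $\tilde\varphi+\tilde c|\tilde z_{\alpha}|\in H^{3+1/2}$; multiplying by $2|\tilde z_{\alpha}|/Q^{2}\in H^{5}$ leaves it in $H^{3+1/2}$, so $\tilde\omega\in H^{3+1/2}$. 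For (3), $\tilde\omega\in H^{3+1/2}$ and $\tilde z\in H^{5}$ give $\tilde c\in H^{9/2}$ (the gain of one from integration, bounded by the regularity of $\tilde z$), so $\tilde c|\tilde z_{\alpha}|\in H^{9/2}\subset H^{3+1/2}$; on the other side, $Q^{2}\tilde\omega/(2|\tilde z_{\alpha}|)\in H^{3+1/2}$ by the product estimate since $Q^{2}\in H^{5}$. Subtracting yields $\tilde\varphi\in H^{3+1/2}$.

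The only genuinely delicate point, as advertised, is the orthogonality cancellation that underlies the one-derivative gain in $\tilde c$; once this is in place, items (1)--(3) are immediate consequences of standard Sobolev embedding, multiplication, and composition estimates.
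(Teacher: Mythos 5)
The paper states Lemma~\ref{lemmaequivenergies} without giving a proof, so there is nothing to compare against line by line; I can only judge the proposal on its own terms. Your strategy is the natural one and almost certainly the intended one: invert the defining relation \eqref{varphi} to express the less regular quantity in terms of the more regular one plus $\tilde c$, then use the Appendix estimates (the Corollary on $\|\tilde c_\al\|_{H^k}$) for the regularity of $\tilde c$, together with the fact that $|\tilde z_\al|$ is constant in $\alpha$, Moser composition for $Q^{\pm 2}$, and Sobolev product estimates. The orthogonality cancellation you identify as the crux, $\tilde z_\al^\perp\cdot\tilde z_\al=0$ when hitting $\tilde\omega$ with a derivative inside $(Q^2BR)_\al\cdot\tilde z_\al$, is exactly the mechanism behind the paper's appendix lemma for $\|\da BR\cdot\da\tilde z\|_{H^k}$ (see the treatment of $R_1$ there). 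So the proof is structurally sound.

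One quantitative correction is needed. You state as a general claim that $\tilde\omega\in H^s$ and $\tilde z\in H^k$ with $k\geq s+1$ give $\tilde c\in H^{s+1}$; but the paper's Corollary on $\tilde c_\al$ requires $\tilde z\in H^{k+2}$ to put $\tilde c_\al\in H^k$, hence you need $\tilde z\in H^{s+2}$, not $H^{s+1}$, to reach $\tilde c\in H^{s+1}$. In items (1) and (2) your applications in fact use $\tilde z\in H^{s+2}$ ($H^4$ and $H^5$ respectively), so those go through as written. In item (3), with $\tilde\omega\in H^{7/2}$ and only $\tilde z\in H^5$, your claim $\tilde c\in H^{9/2}$ overshoots: the Corollary with $k=3$ (which requires $\tilde z\in H^5$, $\tilde\omega\in H^3$) gives $\tilde c_\al\in H^3$, hence $\tilde c\in H^4$ only. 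This is still more than enough, since $H^4\subset H^{7/2}$ and the other term $Q^2\tilde\omega/(2|\tilde z_\al|)\in H^{7/2}$, so the conclusion $\tilde\varphi\in H^{7/2}$ holds. I would simply restate the general claim with $\tilde z\in H^{s+2}$ and in case (3) apply the Corollary at the level $k=3$ (or $k=5/2$) rather than $k=7/2$.
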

This lemma shows that for  a fixed $\tau>0$ the energy of this section is equivalent to this one in section \ref{primeraenergia}. This allows us to use this energy to extend the solutions of the theorem \ref{localexistencetildeWRT} up to a time $T$ which does not depend on $\tau$ (for a small enough $\tau$).

\subsection{The energy estimates}

Again, we will only focus on the new terms ($A-E$) since the estimates for the other ones were proved in \cite{Cordoba-Cordoba-Gancedo:interface-water-waves-2d} and in \cite{Castro-Cordoba-Fefferman-Gancedo-GomezSerrano:finite-time-singularities-Euler}.

\subsubsection{$\tilde{K}$}

\begin{prop}
\begin{align*}
\tilde{K}_t & = \text{NICE3B } + \frac{Q^{2}}{2|\tilde{z}_{\al}|^{3}}H(\tilde{\omega}_{\al \al}) + \frac{1}{|\tilde{z}_{\al}|^{3}}(Q^{2})_{\al}H(\tilde{\omega}_{\al}) \\
& = \text{NICE3B } + \frac{1}{|\tilde{z}_{\al}|^{2}}H(\tilde{\vp}_{\al \al})  - \frac{1}{|\tilde{z}_{\al}|}(\tilde{K}\tilde{\vp})_{\al},
\end{align*}
where NICE3B means $$ \int Q^{j}\da^{k}(\tilde{K}) \da^{k}(NICE3B) \leq CE_{k}^{p}(t)$$
for some positive constants $C, p$ and any $j$.
\end{prop}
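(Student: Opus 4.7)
The proposition splits into two claims, and my plan addresses them in order.

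The first equality has exactly the same form as the proposition on $\tilde{K}_t$ proved in Section~\ref{SectionEnergyWRT} (with NICE3 replaced by NICE3B), so I would reuse that derivation verbatim. One expands $\tilde{K}_t$ using $\tilde{K}=\tilde{z}_{\al\al}\cdot\tilde{z}_\al^{\perp}/|\tilde{z}_\al|^3$ in terms of $\tilde{z}_{\al t}$ and $\tilde{z}_{\al\al t}$, substitutes $\tilde{z}_t=Q^{2}BR+\tilde{c}\tilde{z}_\al$, and isolates the only surviving singular contributions, which come from $BR_{\al\al}$ and $BR_\al$ applied to $\tilde{\omega}$. The remaining work is to verify that every term classified NICE3 against the old energy is NICE3B against the new $E_k$; this is a direct consequence of Lemma~\ref{lemmaequivenergies}, which for fixed $\tau>0$ gives the equivalence between the norms built on $\tilde{\varphi}$, $\tilde{K}$, $\sigma$ and those built on $\tilde{\omega}$.

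For the second equality I rewrite the $\tilde{\omega}$--Hilbert-transform terms by means of \eqref{varphi}. Because the parametrization forces $|\tilde{z}_\al|$ to depend only on $t$, the identity $Q^{2}\tilde{\omega}=2|\tilde{z}_\al|\tilde{\varphi}+2\tilde{c}\,|\tilde{z}_\al|^{2}$ holds pointwise, and two $\al$-derivatives yield $(Q^{2}\tilde{\omega})_{\al\al}=2|\tilde{z}_\al|\tilde{\varphi}_{\al\al}+2|\tilde{z}_\al|^{2}\tilde{c}_{\al\al}$. Combining the product-rule identity
\[
Q^{2}\tilde{\omega}_{\al\al}+2(Q^{2})_\al\tilde{\omega}_\al=(Q^{2}\tilde{\omega})_{\al\al}-(Q^{2})_{\al\al}\tilde{\omega},
\]
with commutators of $Q^{2}$ and $(Q^{2})_\al$ against $H$ (each commutator gains a derivative and is NICE3B) and with the observation that $H((Q^{2})_{\al\al}\tilde{\omega})$ is NICE3B (since $\tilde{\omega}$ is undifferentiated), I obtain
\[
\frac{Q^{2}}{2|\tilde{z}_\al|^{3}}H(\tilde{\omega}_{\al\al})+\frac{(Q^{2})_\al}{|\tilde{z}_\al|^{3}}H(\tilde{\omega}_\al)
=\frac{1}{|\tilde{z}_\al|^{2}}H(\tilde{\varphi}_{\al\al})+\frac{1}{|\tilde{z}_\al|}H(\tilde{c}_{\al\al})+\text{NICE3B}.
\]

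The last and most delicate step is to identify $H(\tilde{c}_{\al\al})$ with $-(\tilde{K}\tilde{\varphi})_\al$ modulo NICE3B. Differentiating the length-preservation constraint $\tilde{z}_\al\cdot \tilde{z}_{\al t}=\tfrac{1}{2}(A^{2})'(t)$ gives $\tilde{c}_\al=(h(t)-\tilde{z}_\al\cdot(Q^{2}BR)_\al)/|\tilde{z}_\al|^{2}$ for some $h(t)$; a second differentiation, together with $\tilde{z}_{\al\al}=|\tilde{z}_\al|\tilde{K}\tilde{z}_\al^{\perp}$, yields
\[
\tilde{c}_{\al\al}=-\frac{\tilde{K}}{|\tilde{z}_\al|}\tilde{z}_\al^{\perp}\cdot(Q^{2}BR)_\al-\frac{\tilde{z}_\al\cdot(Q^{2}BR)_{\al\al}}{|\tilde{z}_\al|^{2}}.
\]
The second piece is NICE3B because the tangential projection $BR\cdot\tilde{z}_\al$ is regularising (it has no leading Hilbert-transform part), so its further $\al$-derivatives act on smooth functionals of $(\tilde{z},\tilde{\omega})$. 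For the first piece, the leading singular structure of $(Q^{2}BR)_\al\cdot\tilde{z}_\al^{\perp}$ is $\tfrac{1}{2}(Q^{2}H(\tilde{\omega}))_\al$ plus smoother terms, and inserting $Q^{2}\tilde{\omega}=2|\tilde{z}_\al|\tilde{\varphi}+2\tilde{c}|\tilde{z}_\al|^{2}$, combined with commuting $\tilde{K}$ past $H$, reproduces $-(\tilde{K}\tilde{\varphi})_\al$ at top order, while every induced remainder is NICE3B.

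The main obstacle is precisely this last step: $\tilde{c}$ is defined implicitly through an integral equation, and a naive expansion produces self-referential quantities such as $H(\tilde{c}_\al)$ or $\tilde{K}\,H(\tilde{c}_\al)$ that are not a priori bounded by $E_k$. Avoiding these requires exploiting the cancellations between the leading singular pieces of $BR_\al\cdot\tilde{z}_\al^{\perp}$, together with the commutator estimate \eqref{commmol} and the standard $[f,H]$ bound, so that every remainder reduces to something genuinely controlled by the new energy $E_k$.
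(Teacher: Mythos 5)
Your outline follows the paper's overall strategy — invoke Lemma~\ref{lemmaequivenergies} for the first equality, substitute \eqref{varphi} to pass from $\tilde{\omega}$ to $\tilde{\varphi}$ and $\tilde{c}$, and then analyze $\tilde{c}_{\al\al}$ — and the algebraic reduction to $\frac{1}{|\tilde{z}_\al|^{2}}H(\tilde{\varphi}_{\al\al})+\frac{1}{|\tilde{z}_\al|}H(\tilde{c}_{\al\al})+\text{NICE3B}$ is sound (the paper keeps the composite $\tilde{\varphi}/Q^{2}$ inside $H$ and commutes $Q^{2}$ at the end, whereas you commute first; both work since the $[Q^{2},H]$-type commutators are infinitely smoothing).

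However, there is a genuine gap in the last step. You claim that the second piece
$-\frac{\tilde{z}_\al\cdot(Q^{2}BR)_{\al\al}}{|\tilde{z}_\al|^{2}}$
is NICE3B ``because the tangential projection $BR\cdot\tilde{z}_\al$ is regularising, so its further $\al$-derivatives act on smooth functionals.'' This confuses $\tilde{z}_\al\cdot(Q^{2}BR)_{\al\al}$ with $\da^{2}\bigl((Q^{2}BR)\cdot\tilde{z}_\al\bigr)$. These differ by $2(Q^{2}BR)_\al\cdot\tilde{z}_{\al\al}+(Q^{2}BR)\cdot\tilde{z}_{\al\al\al}$, and it is exactly in these terms that the tangential cancellation happens; the piece $\tilde{z}_\al\cdot(Q^{2}BR)_{\al\al}$ by itself is not regularising. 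Indeed, the paper's computation of the corresponding quantity $I_{2}$ (in the proof in Section~\ref{SectionEnergyRT}) shows explicitly that it produces singular contributions at the level of $\tilde{K}H(\tilde{\omega}_\al)$ and $\tilde{\omega}H(\tilde{K}_\al)$, of the same size as those coming from your first piece. Discarding them is not just imprecise — it changes the answer: carrying out your first piece alone gives $H(\tilde{c}_{\al\al})=+\tilde{K}\tilde{\varphi}_\al+\text{NICE3B}$, which is opposite in sign to the required $-(\tilde{K}\tilde{\varphi})_\al$ (whose non-NICE3B part is precisely $-\tilde{K}\tilde{\varphi}_\al$); the missing second piece must contribute $-2\tilde{K}\tilde{\varphi}_\al$ modulo NICE3B. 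That sign is crucial for the subsequent cancellation $\frac{dC}{dt}+\frac{dD}{dt}+A^{2}+B^{2,1}=\text{OK}$ in the energy estimate. The paper therefore keeps both contributions $I_{1}$ and $I_{2}$, expands the three singular kernels arising in $(Q^{2}BR)_{\al\al}$, observes a partial cancellation within $I_{2}$, and recombines the survivors into $\frac{Q^{2}}{2|\tilde{z}_\al|}H\bigl((\tilde{K}\tilde{\omega})_\al\bigr)=H\bigl((\tilde{K}\tilde{\varphi})_\al\bigr)$. Your proposal needs to do the same work on the second piece rather than dismiss it.
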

\begin{proof}
The first equality follows from the proof from the last section since the energies are equivalent (see Lemma \ref{lemmaequivenergies}). We now prove the second one. We begin by using the relation \eqref{varphi} to get
\begin{align*}
\tilde{K}_t & = \text{NICE3B } + \frac{Q^{2}}{|\tilde{z}_{\al}|^{2}}H\left(\left(\frac{\tilde{\vp}}{Q^{2}}\right)_{\al \al}\right)
 + \frac{Q^{2}}{|\tilde{z}_{\al}|}H\left(\left(\frac{\tilde{c}}{Q^{2}}\right)_{\al \al}\right) \\
 & + \frac{2(Q^{2})_{\al}}{|\tilde{z}_{\al}|^{2}}H\left(\left(\frac{\tilde{\vp}}{Q^{2}}\right)_{\al}\right)
 + \frac{2(Q^{2})_{\al}}{|\tilde{z}_{\al}|}H\left(\left(\frac{\tilde{c}}{Q^{2}}\right)_{\al}\right) = I + J \\
\end{align*}
We can easily see that
$$ \tilde{c}_{\al} = - \frac{\tilde{z}_{\al}}{|\tilde{z}_{\al}|^{2}} \cdot (Q^{2} BR)_{\al} = \text{NICE3B}$$
since it is at the level of $\tilde{\om}_{\al}, \tilde{z}_{\al \al}$ but we gain one derivative by multiplying by the tangential direction. This proves that
\begin{align*}
J = \text{NICE3B } + \frac{2(Q^{2})_{\al}}{|\tilde{z}_{\al}|^{2}}H\left(\frac{\tilde{\vp}_{\al}}{Q^{2}}\right).
\end{align*}
Looking now to $\tilde{c}_{\al \al}$ we can see that
\begin{align*}
\tilde{c}_{\al \al} = - \frac{\tilde{z}_{\al \al}}{|\tilde{z}_{\al}|^{2}} \cdot (Q^{2} BR)_{\al}
- \frac{\tilde{z}_{\al}}{|\tilde{z}_{\al}|^{2}} \cdot (Q^{2} BR)_{\al \al} = I_1 + I_2.
\end{align*}
Using the standard estimates, the only thing that causes trouble in $I_1$ is when all the derivatives hit $\tilde{\omega}$ and therefore
\begin{align*}
I_1 = \text{NICE3B } - K\frac{Q^{2}}{2|\tilde{z}_{\al}|}H(\tilde{\om}_{\al}).
\end{align*}
Regarding $I_2$, again, we need all the derivatives to hit $BR$ to get the most singular terms, which are

\begin{align*}
I_2 & = \text{NICE3B } - \frac{Q^{2}}{|\tilde{z}_{\al}|^{2}} \tilde{z}_{\al} \cdot \left[\frac{2}{2\pi}\int_{-\pi}^{\pi} \frac{(\tilde{z}_{\al}(\al) - \tilde{z}_{\al}(\beta))^{\perp}}{|\tilde{z}(\al)-\tilde{z}(\beta)|^{2}}\tilde{\omega}_{\al}(\al-\beta)d\beta\right] \\
& - \frac{Q^{2}}{|\tilde{z}_{\al}|^{2}} \tilde{z}_{\al} \cdot \left[\frac{1}{2\pi}\int_{-\pi}^{\pi} \frac{(\tilde{z}(\al) - \tilde{z}(\beta))^{\perp}}{|\tilde{z}(\al)-\tilde{z}(\beta)|^{2}}\tilde{\omega}_{\al \al}(\al-\beta)d\beta\right] \\
& - \frac{Q^{2}}{|\tilde{z}_{\al}|^{2}} \tilde{z}_{\al} \cdot \left[\frac{1}{2\pi}\int_{-\pi}^{\pi} \frac{(\tilde{z}_{\al \al}(\al) - \tilde{z}_{\al \al}(\beta))^{\perp}}{|\tilde{z}(\al)-\tilde{z}(\beta)|^{2}}\tilde{\omega}_{\al}(\al-\beta)d\beta\right] \\
& = \text{NICE3B } +\frac{2Q^{2}}{|\tilde{z}_{\al}|^{2}} \frac{1}{2}\frac{\tilde{z}_{\al} \cdot \tilde{z}_{\al \al}^{\perp}}{|\tilde{z}_{\al}|^{2}}H(\tilde{\omega}_{\al}) - \frac{Q^{2}}{|\tilde{z}_{\al}|^{2}}\frac{\tilde{z}_{\al} \cdot \tilde{z}_{\al \al}^{\perp}}{|\tilde{z}_{\al}|^{2}}H(\tilde{\omega}_{\al}) - \frac{Q^{2}}{|\tilde{z}_{\al}|^{2}}\frac{1}{2}\frac{\tilde{\omega}}{|\tilde{z}_{\al}|^{2}} \tilde{z}_{\al} \cdot H(\tilde{z}_{\al \al \al}^{\perp})
\end{align*}
Collecting all the terms from $I_1$ and $I_2$, we obtain
\begin{align*}
\frac{\tilde{c}_{\al \al}}{Q^{2}} & = \text{NICE3B } + \frac{1}{2|\tilde{z}_{\al}|}H((\tilde{K}\tilde{\omega})_{\al}) \\
& = \text{NICE3B } + \frac{1}{Q^{2}}H((\tilde{K} \tilde{\vp})_{\al}). \\
\end{align*}

We can finally write the total contribution as

\begin{align*}
\tilde{K}_t & = \text{NICE3B } +  \frac{Q^{2}}{|\tilde{z}_{\al}|^{2}}H\left(\frac{\tilde{\vp}_{\al \al}}{Q^{2}}\right)
 -  \frac{Q^{2}}{|\tilde{z}_{\al}|^{2}}H\left(\frac{4Q_{\al}\tilde{\vp}_{\al}}{Q^{3}}\right) \\
&  - \frac{1}{|\tilde{z}_{\al}|}(\tilde{K} \tilde{\vp})_{\al}
+ \frac{2(Q^{2})_{\al}}{|\tilde{z}_{\al}|^{2}}H\left(\frac{\tilde{\vp}_{\al}}{Q^{2}}\right) \\
& = \text{NICE3B } + \frac{Q^{2}}{|\tilde{z}_{\al}|^{2}}H\left(\frac{\tilde{\vp}_{\al \al}}{Q^{2}}\right)- \frac{1}{|\tilde{z}_{\al}|}(\tilde{K} \tilde{\vp})_{\al} \\
& = \text{NICE3B } + \frac{1}{|\tilde{z}_{\al}|^{2}}H\left(\tilde{\vp}_{\al \al}\right)- \frac{1}{|\tilde{z}_{\al}|}(\tilde{K} \tilde{\vp})_{\al}
\end{align*}

as we wanted to prove.

\end{proof}

\subsubsection{$\tilde{\varphi}$}

Throughout this section, we will use the following estimate which was proved in \cite{Castro-Cordoba-Fefferman-Gancedo-GomezSerrano:finite-time-singularities-Euler} for the case without surface tension. The proof is exactly the same for the case with it.

\begin{align*}
\varphi_{\al t} = \text{NICE2B } + \frac{\tilde{\vp} \tilde{\vp}_{\al \al}}{|\tilde{z}_{\al}|} - Q^{2} \sigma \tilde{K} + \tau \frac{Q^{2}}{2|\tilde{z}_{\al}|}\left((\tilde{K}Q)_{\al} + M_{\al}\right),
\end{align*}

where NICE2B means $$ \int Q^{j}\Lambda(\da^{k}(\tilde{\varphi})) \da^{k-1}(NICE2B) \leq CE_{k}^{p}(t)$$
for some positive constants $C, p$ and any $j$.

\subsection{Calculations of the time derivative of the energy}

Using the previous lemmas and propositions, we can get the following estimates for the derivative of the energy:
\begin{align*}
\frac{dA}{dt} & = \text{OK } + \frac{\tau}{|\tilde{z}_{\al}|}\int Q^{2k+1}\da^{k}(\tilde{K})\da^{k}(H(\tilde{\vp}_{\al \al}))
- \tau\int Q^{2k+1}\da^{k}(\tilde{K})\da^{k}((\tilde{K} \tilde{\vp})_{\al}) \\
& = \text{OK } + \frac{\tau}{|\tilde{z}_{\al}|}\int Q^{2k+1}\da^{k}(\tilde{K})\da^{k}(H(\tilde{\vp}_{\al \al}))
- \tau\int Q^{2k+1}\da^{k}(\tilde{K})\da^{k+1}( \tilde{\vp})\tilde{K}
= \text{OK } + A^1 + A^2
\end{align*}
Again, we need to be careful while computing the derivative of $B$ as in Section \ref{SectionEnergyWRT}. We obtain
\begin{align*}
\frac{dB}{dt}  & =  2\int Q^{2k-2} \Lambda(\da^{k}(\tilde{\vp})) \da^{k-1}(\tilde{\vp}_{\al t})
 + \int (Q^{2k-2})_{\al} H(\da^{k}(\tilde{\vp})) \da^{k-1}(\tilde{\vp}_{\al t}) \\
& =  \text{OK } -2\int Q^{2k-2} \Lambda(\da^{k}(\tilde{\vp})) \da^{k-1}\left(\frac{\tilde{\vp} \tilde{\vp}_{\al \al}}{|\tilde{z}_{\al}|}\right) \\
&-2\int Q^{2k-2} \Lambda(\da^{k}(\tilde{\vp})) \da^{k-1}(Q^{2} \sigma \tilde{K}) \\
&+\frac{\tau}{|\tilde{z}_{\al}|}\int Q^{2k-2} \Lambda(\da^{k}(\tilde{\vp})) \da^{k}(Q^{2} (Q\tilde{K})_{\al})\\
&-\frac{\tau}{|\tilde{z}_{\al}|}\int Q^{2k}Q_{\al} \Lambda(\da^{k}(\tilde{\vp})) \da^{k}(\tilde{K})\\
& + \int \frac{\tau}{|\tilde{z}_{\al}|}(k-1)Q^{2k+2}Q_{\al} H(\da^{k}(\tilde{\vp})) \da^{k+1}(\tilde{K}) \\
& = \text{OK } + B^{1} + B^{2} + B^{3}+B^{4} + B^{5}
\end{align*}

\subsection{Development of the derivative of the $B$ term}
%
%
%
%
%
%
%
%


We begin noticing that $B^{1} = \text{OK}$, as it was proved in \cite{Cordoba-Cordoba-Gancedo:interface-water-waves-2d}. Integrating by parts in $B^{5}$, we have that
 \begin{align*}
 B^{5} & = -\int \frac{\tau}{|\tilde{z}_{\al}|}(k-1)Q^{2k+2}Q_{\al} H(\da^{k+1}(\tilde{\vp})) \da^{k}(\tilde{K})
 \end{align*}

 Furthermore, the only singular terms arising from $B^{2}$ are when all derivatives hit either $\tilde{K}$ or $\sigma$, this gives us
\begin{align*}
B^2 = \text{OK } -2\int Q^{2k} \Lambda(\da^{k}(\tilde{\vp})) \da^{k-1}(\sigma) \tilde{K} - 2\int Q^{2k} \Lambda(\da^{k}(\tilde{\vp})) \da^{k-1}(\tilde{K}) \sigma = \text{OK } + B^{2,1} + B^{2,2}.
\end{align*}

However, the only singular term of the Rayleigh-Taylor condition that is not in $H^{k-1}$ is the one belonging to $BR_t(\tilde{z},\tilde{\omega}) \cdot \tilde{z}_{\al}$ when the time derivative hits $\omega$, this means

\begin{align*}
B^{2,1} & = \text{OK } -\tau\int Q^{2k} \Lambda(\da^{k}(\tilde{\vp})) \tilde{K} H(\da^{k}(\tilde{K}Q)) \\
& = -\tau\int Q^{2k+1} \Lambda(\da^{k}(\tilde{\vp})) \tilde{K} H(\da^{k}(\tilde{K})) \\
\end{align*}

Finally, developing $B^{3}$ we obtain
\begin{align*}
B^{3} & = \frac{\tau}{|\tilde{z}_{\al}|}\int Q^{2k-2} \Lambda(\da^{k}(\tilde{\vp})) \da^{k}(Q^{3}\tilde{K}_{\al}) \\
& + \frac{\tau}{|\tilde{z}_{\al}|}\int Q^{2k-2} \Lambda(\da^{k}(\tilde{\vp})) \da^{k}(Q^{2}Q_{\al} \tilde{K}) \\
& = B^{3,1} + B^{3,2}
\end{align*}

Modulo lower order terms we can see that
\begin{align*}
B^{3,2} = \text{OK } + \frac{\tau}{|\tilde{z}_{\al}|}\int Q^{2k}Q_{\al} \Lambda(\da^{k}(\tilde{\vp})) \da^{k}(\tilde{K})
\end{align*}

We can continue splitting $B^{3,1}$ into
\begin{align*}
B^{3,1}& = \text{OK } + \frac{\tau}{|\tilde{z}_{\al}|}\int Q^{2k+1} \Lambda(\da^{k}(\tilde{\vp})) \da^{k+1}(\tilde{K})
+ \frac{\tau}{|\tilde{z}_{\al}|}\int 3kQ^{2k}Q_{\al} \Lambda(\da^{k}(\tilde{\vp})) \da^{k}(\tilde{K}) \\
& = \text{OK }-\frac{\tau}{|\tilde{z}_{\al}|}\int Q^{2k+1} \Lambda(\da^{k+1}(\tilde{\vp})) \da^{k}(\tilde{K})
+ \frac{\tau}{|\tilde{z}_{\al}|}\int (k-1)Q^{2k}Q_{\al} \Lambda(\da^{k}(\tilde{\vp})) \da^{k}(\tilde{K}) \\
 & = \text{OK } + B^{3,1,1} + B^{3,1,2}
\end{align*}
where in the last equality we have performed an integration by parts. We can observe that

$$ B^{3,2} + B^{4} = B^{3,1,2} + B^{5} = 0, \quad  B^{3,1,1} + A^{1} = 0$$

We will now see that $B^{2,2}$ cancels with the term arising from the derivative of $E$. Taking into account the previous lemmas

$$\frac{dE}{dt} =  2\int \sigma Q^{2k}\left(\da^{k-1}(\tilde{K})\right)H(\da^{k+1}(\tilde{\vp})) = \text{OK } -B^{2,2}$$

Finally, we will see that the contributions from the time derivatives of $C$ and $D$ cancel $B^{2,1}$ and $A^{2}$. We start by noticing that, modulo lower order terms $A^{2} = B^{2,1}$. Furthermore

\begin{align*}
\frac{dC}{dt} & =  \text{OK } + 2\tau \int (\mathcal{C}\|\tilde{K}(t)\|_{H^{1}} + \tilde{K}) Q^{2k+1}H(\da^{k+1}(\tilde{\vp}))\Lambda(\da^{k-1}(\tilde{K})) \\
\frac{dD}{dt} & =  \text{OK } + 2\tau \int \mathcal{C}\|\tilde{K}(t)\|_{H^{1}} Q^{2k+1}\left(\da^{k}(\tilde{\vp})\right) \da^{k+1}(\tilde{K}),
\end{align*}

which, by integration by parts results in

\begin{align*}
\frac{dC}{dt} + \frac{dD}{dt} + A^{2} + B^{2,1} = \text{OK}.
\end{align*}

Adding all the contributions, we can bound the derivative in time of the energy by a power of the energy.

\appendix
\section{Helpful estimates for the Birkhoff-Rott operator}
In this Appendix we will prove some of the estimates used throughout the paper for the sake of clarity to the reader.

We begin with a classical decomposition of the Birkhoff-Rott operator. We should notice that we can write it in the following ways. On one hand:

\begin{align*}
BR(\tilde{z},\tilde{\omega})(\al)&=\frac{1}{2\pi}\int_{-\pi}^\pi \left(\frac{(\tilde{z}(\al)-\tilde{z}(\al-\beta))^\bot}{|\tilde{z}(\al)-\tilde{z}(\al-\beta)|^2}-\frac{\dpa \tilde{z}(\al)}{2|\da \tilde{z}(\al)|^2\tan(\beta/2)}\right)\tilde{\omega}(\al-\beta)d\beta \\
& +\frac{1}{2\pi}\int_{-\pi}^\pi \left(\frac{\dpa \tilde{z}(\al)}{2|\da \tilde{z}(\al)|^2\tan(\beta/2)}\right)\tilde{\omega}(\al-\beta)d\beta \\
& = \frac{1}{2\pi}\int_{-\pi}^\pi \left(\frac{(\tilde{z}(\al)-\tilde{z}(\al-\beta))^\bot}{|\tilde{z}(\al)-\tilde{z}(\al-\beta)|^2}-\frac{\dpa \tilde{z}(\al)}{2|\da \tilde{z}(\al)|^2\tan(\beta/2)}\right)\tilde{\omega}(\al-\beta)d\beta+\frac{\dpa \tilde{z}(\al)}{2|\da \tilde{z}(\al)|^2}H(\tilde{\omega})(\al) \\
& = \frac{\dpa \tilde{z}(\al)}{2|\da \tilde{z}(\al)|^2}H(\tilde{\omega})(\al) + \text{l.o.t}(\tilde{\omega}).
\end{align*}

On the other hand:

\begin{align*}
BR(\tilde{z},\tilde{\omega})(\al)&=\frac{1}{2\pi}\int_{-\pi}^\pi \frac{(\tilde{z}(\al)-\tilde{z}(\al-\beta))^\bot}{|\tilde{z}(\al)-\tilde{z}(\al-\beta)|^2}
(\tilde{\omega}(\al-\beta)-\tilde{\omega}(\al))d\beta
+ \frac{\tilde{\omega}(\al)}{2\pi}\int_{-\pi}^\pi \frac{(\tilde{z}(\al)-\tilde{z}(\al-\beta))^\bot}{|\tilde{z}(\al)-\tilde{z}(\al-\beta)|^2}d\beta\\
&=\frac{1}{2\pi}\int_{-\pi}^\pi \frac{(\tilde{z}(\al)-\tilde{z}(\al-\beta))^\bot}{|\tilde{z}(\al)-\tilde{z}(\al-\beta)|^2}
(\tilde{\omega}(\al-\beta)-\tilde{\omega}(\al))d\beta \\
& + \frac{\tilde{\omega}(\al)}{2\pi}\int_{-\pi}^\pi (\tilde{z}(\al)-\tilde{z}(\al-\beta))^\bot\left(\frac{1}{|\tilde{z}(\al)-\tilde{z}(\al-\beta)|^2} - \frac{1}{4|\tilde{z}_{\al}(\al)|^{2}\sin^{2}\left(\frac{\beta}{2}\right)}\right)d\beta \\
&+ \frac{\tilde{\omega}(\al)}{2|\tilde{z}_{\al}(\al)|^{2}} \Lambda(\tilde{z}^{\perp}(\al))\\
& = \frac{\tilde{\omega}(\al)}{2|\tilde{z}_{\al}(\al)|^{2}} \Lambda(\tilde{z}^{\perp}(\al))+ \text{l.o.t}(\tilde{z}).
\end{align*}

See \cite{Ambrose-Masmoudi:zero-surface-tension-2d-waterwaves}, \cite{Cordoba-Cordoba-Gancedo:interface-water-waves-2d} for more details concerning the lower order terms.

We will now prove energy estimates for the Birkhoff-Rott integral, showing that it is as regular as $\da \tilde{z}$. The proof is taken from \cite[Section 6]{Cordoba-Cordoba-Gancedo:interface-water-waves-2d}.

\begin{lemma}The following estimate holds
\begin{eqnarray}\label{nsibr}
\|BR(\tilde{z},\tilde{\omega})\|_{H^k}\leq
C(\|\F(\tilde{z})\|^2_{L^\infty}+\|\tilde{z}\|^2_{H^{k+1}}+\|\tilde{\omega}\|^2_{H^{k}})^j,
\end{eqnarray}
for $k\geq 2$, where $C$ and $j$ are constants independent of $\tilde{z}$
and $\tilde{\omega}$.
\end{lemma}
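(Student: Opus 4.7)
The plan is to exploit the two decompositions of $BR$ already written in the appendix, combined with the arc-chord quantifier $\F(\tilde{z})$ to bound the singular kernel. Split
$$ BR(\tilde{z},\tilde{\omega})(\al) = \frac{\dpa \tilde{z}(\al)}{2|\da \tilde{z}(\al)|^{2}} H(\tilde{\omega})(\al) + R(\tilde{z},\tilde{\omega})(\al), $$
where $R$ denotes the ``l.o.t.'' integral in the first decomposition. The Hilbert-transform piece is immediate: $\|H(\tilde{\omega})\|_{H^{k}}=\|\tilde{\omega}\|_{H^{k}}$, and the prefactor $\dpa \tilde{z}/(2|\da\tilde{z}|^{2})$ is controlled by Sobolev algebra (valid since $H^{k}\hookrightarrow L^{\infty}$ for $k\geq 2$), together with the lower bound on $|\da\tilde{z}|$ coming from the parametrization choice $|\da\tilde{z}|=A(t)$.

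For the remainder $R$, I would use the Taylor cancellations $\tilde{z}(\al)-\tilde{z}(\al-\beta)=\da\tilde{z}(\al)\beta+O(\beta^{2})$ and $2\tan(\beta/2)=\beta+O(\beta^{3})$: the integrand then has no singularity at $\beta=0$, and its pointwise size is controlled by $\|\tilde{z}\|_{C^{2}}$, $\|\tilde{\omega}\|_{L^{\infty}}$, and $\F(\tilde{z})^{2}$ through the bound $|\tilde{z}(\al)-\tilde{z}(\al-\beta)|^{-2}\leq\F(\tilde{z})^{2}|\beta|^{-2}$. To estimate $\da^{k}R$, distribute the derivatives under the integral via Leibniz: each derivative falling on $\tilde{\omega}(\al-\beta)$ merely changes that factor; each derivative falling on the bracketed kernel produces rational expressions in the differences $\tilde{z}(\al)-\tilde{z}(\al-\beta)$ and its derivatives, but the same Taylor mechanism still cancels the singularity at $\beta=0$ provided we keep pairing each new term with its correct principal-value counterterm.

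The crux is the top-order piece in which all $k$ derivatives fall on the kernel and none on $\tilde{\omega}$. A careful rearrangement expresses this contribution, up to a regular remainder, as another Hilbert-type operator applied to $\da^{k}\tilde{\omega}$ or to $\da^{k+1}\tilde{z}$, which are paid for by $\|\tilde{\omega}\|_{H^{k}}$ and $\|\tilde{z}\|_{H^{k+1}}$ respectively; equivalently one can switch to the second decomposition from the appendix so that the analogous top-order piece emerges as $\tilde{\omega}\,\Lambda(\tilde{z}^{\perp})/(2|\tilde{z}_{\al}|^{2})$, again absorbed into $\|\tilde{z}\|_{H^{k+1}}$. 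All intermediate terms are controlled by Sobolev product estimates and by repeated application of $|\tilde{z}(\al)-\tilde{z}(\al-\beta)|\geq|\beta|/\F(\tilde{z})$, which generates the positive power $j$ on the right-hand side.

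The main obstacle is precisely this bookkeeping at top order: whenever a derivative hits the denominator $|\tilde{z}(\al)-\tilde{z}(\al-\beta)|^{2}$, producing higher negative powers of the chord length, one must use the arc-chord bound together with the Taylor cancellation to prevent an $L^{1}_{\beta}$ divergence, and one must split off the correct Hilbert-type principal value to absorb the half-derivative loss. Once this decomposition into principal plus regularized pieces is firmly in place, the final estimate \eqref{nsibr} follows from standard Sobolev calculus, exactly as carried out in \cite{Cordoba-Cordoba-Gancedo:interface-water-waves-2d}.
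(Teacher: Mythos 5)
Your plan follows the paper's proof essentially verbatim: the same split of $BR$ into $\frac{\dpa\tilde{z}}{2|\da\tilde{z}|^{2}}H(\tilde{\omega})$ plus a remainder kernel $C_1$, the same Taylor cancellations paired against the arc-chord bound $|\tilde{z}(\al)-\tilde{z}(\al-\beta)|\ge|\beta|/\F(\tilde{z})$, and the same identification of the top-order singular pieces as $H(\da^{k}\tilde{\omega})$ and $\Lambda(\da^{k}\tilde{z}^{\perp})$ (the paper's $P_1$ and $P_2,P_3$ for $k=2$). This is correct and matches the argument taken from \cite{Cordoba-Cordoba-Gancedo:interface-water-waves-2d}.
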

\begin{rem}
Using this estimate for $k=2$ we find easily that
\begin{eqnarray}\label{nliibr}
\|\da BR(\tilde{z},\tilde{\omega})\|_{L^\infty}\leq
C(\|\F(\tilde{z})\|^2_{L^\infty}+\|\tilde{z}\|^2_{H^{3}}+\|\tilde{\omega}\|^2_{H^{2}})^j.
\end{eqnarray}
\end{rem}
\begin{proof}
We shall present the proof for $k=2$. Let us write
\begin{align*}
\begin{split}
BR(\tilde{z},\tilde{\omega})(\al,t)&=\frac{1}{2\pi}\int_{-\pi}^\pi C_1(\al,\beta)\tilde{\omega}(\al-\beta)d\beta
+\frac{\dpa \tilde{z}(\al)}{2|\da \tilde{z}(\al)|^2}H(\tilde{\omega})(\al)
\end{split}
\end{align*}
where $C_1$ is given by

\begin{equation}\label{fb}
C_1(\al,\beta)=\frac{(\tilde{z}(\al)-\tilde{z}(\al-\beta))^\bot}{|\tilde{z}(\al)-\tilde{z}(\al-\beta)|^2}-\frac{\dpa \tilde{z}(\al)}{2|\da \tilde{z}(\al)|^2\tan(\beta/2)},
\end{equation}

We shall show that $\|C_1\|_{L^\infty}\leq C\|\F(\tilde{z})\|^2_{L^{\infty}}\|\tilde{z}\|^2_{C^2}$. To do so we split $C_1=D_1+D_2+D_3$ where
$$
D_1=\frac{(\tilde{z}(\al)-\tilde{z}(\al-\beta)-\da \tilde{z}(\al)\beta)^\bot}{|\tilde{z}(\al)-\tilde{z}(\al-\beta)|^2},\quad D_2=\dpa \tilde{z}(\al)[\frac{\beta}{|\tilde{z}(\al)-\tilde{z}(\al-\beta)|^2}-\frac{1}{|\da \tilde{z}(\al)|^2\beta}],
$$
and
$$
D_3=\frac{\dpa \tilde{z}(\al)}{|\da \tilde{z}(\al)|^2}[\frac1\beta-\frac{1}{2\tan(\beta/2)}].
$$
The inequality \begin{equation}\label{ncll}|\tilde{z}(\al)-\tilde{z}(\al-\beta)-\da
\tilde{z}(\al)\beta|\leq \|\tilde{z}\|_{C^2}|\beta|^2\end{equation} yields easily
$|D_1|\leq \|\tilde{z}\|_{C^2}\|\F(\tilde{z})\|^2_{L^{\infty}}$.

Then we can rewrite $D_2$ as follows:
$$
D_2=\dpa \tilde{z}(\al)[\frac{(\da \tilde{z}(\al)\beta-(\tilde{z}(\al)-\tilde{z}(\al-\beta)))\cdot (\da \tilde{z}(\al)\beta+(\tilde{z}(\al)-\tilde{z}(\al-\beta)))}{|\tilde{z}(\al)-\tilde{z}(\al-\beta)|^2|\da \tilde{z}(\al)|^2\beta}],
$$
and, in particular, we have
$$
|D_2|\leq \frac{|\da \tilde{z}(\al)\beta-(\tilde{z}(\al)-\tilde{z}(\al-\beta))|(|\da \tilde{z}(\al)\beta|+|\tilde{z}(\al)-\tilde{z}(\al-\beta)|)}{|\tilde{z}(\al)-\tilde{z}(\al-\beta)|^2|\da \tilde{z}(\al)||\beta|}.
$$
Using \eqref{ncll} we find that $|D_2|\leq 2
\|\tilde{z}\|_{C^2}\|\F(\tilde{z})\|^2_{L^{\infty}}$.

Next let us observe that since $\beta \in [-\pi,\pi]$ gives $|D_3|\leq
C\|\F(\tilde{z})\|_{L^{\infty}}$.

The boundedness of the term $C_1$ in
$L^\infty$ gives us easily
\begin{equation}\label{brl2}
\|BR(\tilde{z},\tilde{\omega})\|_{L^2}\leq C\|\F(\tilde{z})\|^2_{L^\infty}\|\tilde{z}\|^2_{C^2}\|\tilde{\omega}\|_{L^2}.
\end{equation}
 In $\da^2BR(\tilde{z},\tilde{\omega})$, the most singular terms are given by
$$
P_1(\al)=\frac{1}{2\pi}PV\int_{-\pi}^\pi\da^2\tilde{\omega}(\al-\beta)\frac{(\tilde{z}(\al)-\tilde{z}(\al-\beta))^{\bot}}{|\tilde{z}(\al)-\tilde{z}(\al-\beta)|^2}d\beta,
$$
$$
P_2(\al)=\frac{1}{2\pi}PV\int_{-\pi}^\pi\tilde{\omega}(\al-\beta)\frac{(\da^2\tilde{z}(\al)-\da^2\tilde{z}(\al-\beta))^{\bot}}{|\tilde{z}(\al)-\tilde{z}(\al-\beta)|^2}d\beta,
$$
$$
P_3(\al)=-\frac{1}{\pi}PV\int_{-\pi}^\pi\tilde{\omega}(\al-\beta)\frac{(\tilde{z}(\al)-\tilde{z}(\al-\beta))^\bot}{|\tilde{z}(\al)-\tilde{z}(\al-\beta)|^4}\big(\tilde{z}(\al)-\tilde{z}(\al-\beta))\cdot(\da^2 \tilde{z}(\al)-\da^2 \tilde{z}(\al-\beta))\big)d\beta.
$$
Again we have the expression
\begin{align*}
\begin{split}
P_1(\al)&=\frac{1}{2\pi}\int_{-\pi}^\pi C_1(\al,\beta)\da^2\tilde{\omega}(\al-\beta)d\beta
+\frac{\dpa
\tilde{z}(\al)}{2|\da \tilde{z}(\al)|^2}H(\da^2\tilde{\omega})(\al)d\al,
\end{split}
\end{align*} giving us
\begin{align}\label{q1nl22dbr}
\begin{split}
|P_1(\al)|&\leq C\|\F(\tilde{z})\|^j_{L^\infty}\|\tilde{z}\|^j_{C^2}(\|\da^2\tilde{\omega}\|_{L^2}+|H(\da^2\tilde{\omega})(\alpha)|).
\end{split}
\end{align}
Next let us write $P_2=Q_1+Q_2+Q_3$ where
$$
Q_1(\al)=\frac{1}{2\pi}\int_{-\pi}^\pi(\tilde{\omega}(\al-\beta)-\tilde{\omega}(\al))\frac{(\da^2 \tilde{z}(\al)-\da^2 \tilde{z}(\al-\beta))^{\bot}}{|\tilde{z}(\al)-\tilde{z}(\al-\beta)|^2}d\beta,
$$
$$
Q_2(\al)=\frac{\tilde{\omega}(\al)}{2\pi}\int_{-\pi}^\pi(\da^2\tilde{z}(\al)-\da^2\tilde{z}(\al-\beta))^{\bot}\big(\frac{1}{|\tilde{z}(\al)-\tilde{z}(\al-\beta)|^2}-\frac{1}{|\da \tilde{z}(\al)|^2|\beta|^2}\big)d\beta,
$$
$$
Q_3(\al)=\frac{1}{2\pi}\frac{\tilde{\omega}(\al)}{|\da \tilde{z}(\al)|^2}\int_{-\pi}^\pi\!\!(\da^2\tilde{z}(\al)\!-\!\da^2\tilde{z}(\al\!-\!\beta))^{\bot}\big(\frac{1}{|\beta|^2}\!-\!\frac{1}{4\sin^2(\beta/2)}\big)d\beta\!+\!\frac{1}{2}\frac{\tilde{\omega}(\al)}{|\da \tilde{z}(\al)|^2}\la (\da^2\tilde{z^{\bot}})(\al),
$$
where $\la=\da H$.

Using that
$$|\da^2\tilde{z}(\al)-\da^2\tilde{z}(\al-\beta)|\leq|\beta|^\delta\|\tilde{z}\|_{C^{2,\delta}},$$
we get $|Q_1(\al)|+|Q_2(\al)|\leq
\|\tilde{\omega}\|_{C^1}\|\F(\tilde{z})\|^{j}\|\tilde{z}\|^j_{C^{2,\delta}},$ while for
$Q_3$ we have
\begin{align*}
|Q_3(\al)|&\leq C\|\tilde{\omega}\|_{L^\infty}\|\F(\tilde{z})\|_{L^\infty}(\|\tilde{z}\|_{C^2}+|\la (\da^2\tilde{z}^{\bot})(\al)|),
\end{align*}
that is
\begin{equation}\label{q2nl22dbr}
|P_2(\al)|\leq (1+|\la (\da^2\tilde{z}^{\bot})(\al)|)\|\tilde{\omega}\|_{C^1}\|\F(\tilde{z})\|^{j}\|\tilde{z}\|^j_{C^{2,\delta}}.
\end{equation}
Let us now consider  $P_3=Q_4+Q_5+Q_6+Q_7+Q_8+Q_9$, where
\begin{align*}
Q_4=\frac{-1}{\pi}\int_{-\pi}^\pi(\tilde{\omega}(\al\!-\!\beta)\!-\!\tilde{\omega}(\al))\frac{(\tilde{z}(\al)\!-\!\tilde{z}(\al\!-\!\beta))^\bot}{|\tilde{z}(\al)\!-\!\tilde{z}(\al\!-\!\beta)|^4}\big((\tilde{z}(\al)\!-\!\tilde{z}(\al\!-\!\beta))\!\cdot\!(\da^2 \tilde{z}(\al)\!-\!\da^2 \tilde{z}(\al\!-\!\beta))\big)d\beta,
\end{align*}
$$Q_5=-\frac{\tilde{\omega}(\al)}{\pi}\int_{-\pi}^\pi\frac{(\tilde{z}(\al)\!-\!\tilde{z}(\al\!-\!\beta)\!-\!\da \tilde{z}(\al)\beta)^{\bot}}{|\tilde{z}(\al)\!-\!\tilde{z}(\al\!-\!\beta)|^4}\big((\tilde{z}(\al)\!-\!\tilde{z}(\al\!-\!\beta))\cdot(\da^2 \tilde{z}(\al)\!-\!\da^2 \tilde{z}(\al\!-\!\beta))\big)d\beta,
$$
$$
Q_6=-\frac{\tilde{\omega}(\al)\dpa \tilde{z}(\al)}{\pi}\int_{-\pi}^\pi\frac{\beta(\tilde{z}(\al)\!-\!\tilde{z}(\al\!-\!\beta)\!-\!\da \tilde{z}(\al)\beta)\cdot(\da^2 \tilde{z}(\al)\!-\!\da^2 \tilde{z}(\al\!-\!\beta))}{|\tilde{z}(\al)\!-\!\tilde{z}(\al\!-\!\beta)|^4}d\beta,
$$
$$
Q_7=-\frac{\tilde{\omega}(\al)\dpa \tilde{z}(\al)}{\pi}\da \tilde{z}(\al)\cdot\!\!\int_{-\pi}^\pi\beta^2(\da^2 \tilde{z}(\al)-\da^2 \tilde{z}(\al-\beta))\big(\frac{1}{|\tilde{z}(\al)\!-\!\tilde{z}(\al\!-\!\beta)|^4}-\frac{1}{|\da \tilde{z}(\al)|^4|\beta|^4}\big)d\beta,
$$
$$
Q_8=-\frac{\tilde{\omega}(\al)\dpa \tilde{z}(\al)}{\pi|\da \tilde{z}(\al)|^4}\da \tilde{z}(\al)\cdot\!\!\int_{-\pi}^\pi(\da^2 \tilde{z}(\al)-\da^2 \tilde{z}(\al-\beta))\big(\frac{1}{|\beta|^2}-\frac{1}{4\sin^2(\beta/2)}\big)d\beta,
$$
and
$$
Q_9=-\frac{\tilde{\omega}(\al)\dpa \tilde{z}(\al)}{|\da \tilde{z}(\al)|^4}\da \tilde{z}(\al)\cdot \la(\da^2 \tilde{z}(\al)).
$$
Proceeding as before we get
\begin{equation*}
|P_3(\al)|\leq C(1+|\la (\da^2\tilde{z})(\al)|)\|\tilde{\omega}\|_{C^1}\|\F(\tilde{z})\|^j_{L^\infty}\|\tilde{z}\|^j_{C^{2,\delta}},
\end{equation*}
which together with \eqref{q1nl22dbr} and \eqref{q2nl22dbr} gives
us the estimate
$$
|(P_1+P_2+P_3)(\al)|\leq C(1+|\la (\da^2\tilde{z})(\al)|+|H(\da^2\tilde{\omega})(\alpha)|)\|\tilde{\omega}\|_{C^1}(\|\F(\tilde{z})\|^j_{L^\infty}+\|\tilde{z}\|^j_{H^3}).
$$
For the rest of the terms in $\da^2 BR(\tilde{z},\tilde{\omega})$ we obtain
analogous estimates allowing us to conclude the equality
\begin{equation*}\label{enl22dbr}
\|\da^2 BR(\tilde{z},\tilde{\omega})\|_{L^2}\leq C(1+\|\da^3\tilde{z}\|_{L^2}+\|\da^2\tilde{\omega}\|_{L^2}) \|\tilde{\omega}\|_{C^1}\|\F(\tilde{z})\|^j_{L^\infty}\|\tilde{z}\|^j_{C^{2,\delta}}.
\end{equation*}
Finally the Sobolev inequalities yield \eqref{nsibr} for $k=2$.
\end{proof}

\begin{lemma}The following estimate will also be helpful
\begin{eqnarray}\label{nsibr}
\|\da BR(\tilde{z},\tilde{\omega}) \cdot \da \tilde{z}\|_{H^k}\leq
C(\|\F(\tilde{z})\|^2_{L^\infty}+\|\tilde{z}\|^2_{H^{k+2}}+\|\tilde{\omega}\|^2_{H^{k}})^j,
\end{eqnarray}
for $k\geq 2$, where $C$ and $j$ are constants independent of $\tilde{z}$
and $\tilde{\omega}$.
\end{lemma}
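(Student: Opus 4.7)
The plan is to refine the proof of the previous lemma by exploiting the fact that the leading singular piece of $BR$ in $\tilde{\omega}$ is normal to the tangent direction $\tilde{z}_\al$, so that one full $\tilde{\omega}$-derivative is gained when we contract $\partial_\al BR$ with $\tilde{z}_\al$ (this is exactly the discrepancy between the previous lemma, which forces $\|\tilde{\omega}\|_{H^{k+1}}$ in the analogue of $\|\partial_\al BR\|_{H^k}$, and the present claim, which asks only $\|\tilde{\omega}\|_{H^k}$). Starting from the decomposition recalled at the beginning of this appendix,
\[
BR(\tilde{z},\tilde{\omega})(\al)=\frac{\tilde{z}_\al^{\bot}(\al)}{2|\tilde{z}_\al(\al)|^{2}}H(\tilde{\omega})(\al)+L(\tilde{z},\tilde{\omega})(\al),
\]
in which $L$ gains one $\tilde{\omega}$-derivative thanks to the kernel cancellation contained in $C_1$ of \eqref{fb}, I differentiate in $\al$ to obtain
\[
\partial_\al BR=\frac{\tilde{z}_\al^{\bot}}{2|\tilde{z}_\al|^{2}}H(\tilde{\omega}_\al)+\mathcal{R}(\tilde{z},\tilde{\omega}),
\]
where every term collected in $\mathcal{R}$ either carries an extra $\tilde{z}_{\al\al}$ factor or corresponds to a derivative landing on the smooth geometric coefficients. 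Thus $\mathcal{R}$ is controlled in $H^k$ as long as $\tilde{z}\in H^{k+2}$ and $\tilde{\omega}\in H^k$.

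Next, I contract with $\tilde{z}_\al$. The top piece $\frac{\tilde{z}_\al^{\bot}}{2|\tilde{z}_\al|^{2}}H(\tilde{\omega}_\al)\cdot \tilde{z}_\al$ vanishes identically by orthogonality, so only $\mathcal{R}\cdot\tilde{z}_\al$ survives. To promote the estimate from $L^2$ to $H^k$, I apply $\partial_\al^k$ to $\partial_\al BR\cdot\tilde{z}_\al$ and distribute by Leibniz; the only potentially problematic contribution is $\partial_\al^{k+1}BR\cdot\tilde{z}_\al$. Unwinding the derivatives inside the convolution representation as in the previous lemma (splitting analogues of $P_1,P_2,P_3$ and subtracting their principal symbols to produce the non-singular kernels $C_1$, $D_1,D_2,D_3$, $Q_1,\dots,Q_9$), the only term that escapes the desired bound is again $\frac{\tilde{z}_\al^{\bot}}{2|\tilde{z}_\al|^{2}}H(\partial_\al^{k+1}\tilde{\omega})$, which is perpendicular to $\tilde{z}_\al$ and drops out after contraction. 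Every other surviving term is either of the curvature type $|\tilde{z}_\al|\tilde{K}\,H(\partial_\al^k\tilde{\omega})$, controlled by $\|\tilde{K}\|_{L^\infty}\|\tilde{\omega}\|_{H^k}$ and hence by $\|\tilde{z}\|_{H^{k+2}}\|\tilde{\omega}\|_{H^k}$, or is of the form $[H,f]g$ with $f$ Lipschitz and $g$ in $L^2$, for which the standard Calderón-type commutator estimate recovers the missing derivative.

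The main technical obstacle is the bookkeeping: at each of the $k+1$ total differentiations one must decide which factor absorbs the extra derivative, and, when the derivative lands on $H(\tilde{\omega})$, verify that the accompanying geometric prefactor is still proportional to $\tilde{z}_\al^{\bot}$ (so that contraction with $\tilde{z}_\al$ kills it) or, failing that, isolate an explicit $\tilde{z}_{\al\al}$ which turns it into a curvature term after contraction. Once this systematic separation of tangential and normal pieces is carried out, the remaining non-singular integrals are treated exactly as in the proof of the preceding lemma, bounding the smooth kernels by their $L^\infty$ norms with the help of the arc-chord constant $\|\mathcal{F}(\tilde{z})\|_{L^\infty}$ and closing with Sobolev embedding to yield the stated estimate with $\|\tilde{\omega}\|_{H^k}$ rather than $\|\tilde{\omega}\|_{H^{k+1}}$.
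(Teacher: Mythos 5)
Your proof rests on exactly the same cancellation the paper exploits: after contracting with $\da\tilde{z}$, the leading normal piece $\frac{\tilde{z}_\al^\bot}{2|\tilde{z}_\al|^2}H(\da^{k+1}\tilde{\omega})$ of $\da^{k+1}BR$ drops out, and one $\tilde{\omega}$-derivative is gained. The paper organizes this differently — it writes out the three most singular integrals $R_1,R_2,R_3$ of $\da BR\cdot\da\tilde{z}$, observes that the $R_1$ kernel $\da\tilde{z}(\al)\cdot(\tilde{z}(\al)-\tilde{z}(\al-\beta))^\bot/|\tilde{z}(\al)-\tilde{z}(\al-\beta)|^2$ is bounded because its would-be principal symbol $\da\tilde{z}\cdot\da\tilde{z}^\bot$ vanishes, and then uses $\da\tilde{\omega}(\al-\beta)=-\partial_\beta\tilde{\omega}(\al-\beta)$ to integrate by parts and produce a kernel of order $-1$ — but this is the same mechanism as your Hilbert-transform extraction plus orthogonality, just presented at the level of explicit kernels rather than of the leading symbol. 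The only place where your sketch is slightly less explicit than the paper is that the surviving $\tilde{z}_\al\cdot\int C_1(\al,\beta)\,\da^{k+1}\tilde{\omega}(\al-\beta)\,d\beta$ term, which still carries one derivative too many on $\tilde{\omega}$ even though the kernel is bounded, needs the integration-by-parts in $\beta$ (equivalently, a Calder\'on-type commutator identity) to trade that extra derivative for a $\partial_\beta$ on the kernel; this does fall under your ``$[H,f]g$'' category once stated, so the argument is sound, but it is worth naming that step since it is the one place where the bounded-kernel observation alone does not suffice.
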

\begin{proof}
In $\da BR(\tilde{z},\tilde{\omega}) \cdot \da \tilde{z}$, the most singular terms are given by
$$
R_1(\al)=\frac{1}{2\pi}PV\int_{-\pi}^\pi\da\tilde{\omega}(\al-\beta)\frac{\da \tilde{z}(\al) \cdot (\tilde{z}(\al)-\tilde{z}(\al-\beta))^{\bot}}{|\tilde{z}(\al)-\tilde{z}(\al-\beta)|^2}d\beta,
$$
$$
R_2(\al)=\frac{1}{2\pi}PV\int_{-\pi}^\pi\tilde{\omega}(\al-\beta)\frac{\da \tilde{z}(\al) \cdot (\da\tilde{z}(\al)-\da\tilde{z}(\al-\beta))^{\bot}}{|\tilde{z}(\al)-\tilde{z}(\al-\beta)|^2}d\beta,
$$
$$
R_3(\al)=-\frac{1}{\pi}PV\int_{-\pi}^\pi\tilde{\omega}(\al-\beta)\frac{\da \tilde{z}(\al) \cdot (\tilde{z}(\al)-\tilde{z}(\al-\beta))^\bot}{|\tilde{z}(\al)-\tilde{z}(\al-\beta)|^4}\big(\tilde{z}(\al)-\tilde{z}(\al-\beta))\cdot(\da \tilde{z}(\al)-\da \tilde{z}(\al-\beta))\big)d\beta.
$$

$R_2$ can be estimated in the same way as $P_2$. Regarding $R_1$, one can write it as
$$
R_1(\al)=\frac{1}{2\pi}PV\int_{-\pi}^\pi\da\tilde{\omega}(\al-\beta)\left[\frac{\da \tilde{z}(\al) \cdot (\tilde{z}(\al)-\tilde{z}(\al-\beta))^{\bot}}{|\tilde{z}(\al)-\tilde{z}(\al-\beta)|^2} - \frac{\da \tilde{z}_{\al} \cdot \da \tilde{z}_{\al}^{\bot}}{|\tilde{z}_{\al}(\al)|^{2}}\right]d\beta.
$$

Now, since $\da \tilde{\omega}(\al-\beta) = -\partial_{\beta} \tilde{\omega}(\al-\beta)$, one can integrate by parts and bound the resulting kernel (which has order -1) giving
$$ |R_1| \leq C(\|\F(\tilde{z})\|^2_{L^\infty}+\|\tilde{z}\|^2_{H^{k+2}}+\|\tilde{\omega}\|^2_{H^{k}})^j.$$

Finally, $R_3$ can be written in the form

\begin{align*}
R_3(\al)=-\frac{1}{\pi}PV\int_{-\pi}^\pi\tilde{\omega}(\al-\beta)&\left[\frac{\da \tilde{z}(\al) \cdot (\tilde{z}(\al)-\tilde{z}(\al-\beta))^\bot}{|\tilde{z}(\al)-\tilde{z}(\al-\beta)|^4}\big(\tilde{z}(\al)-\tilde{z}(\al-\beta))\cdot(\da \tilde{z}(\al)-\da \tilde{z}(\al-\beta))\big)\right. \\
&\left.- \frac{\da \tilde{z}(\al) \cdot \da \tilde{z}(\al)^{\bot}}{\beta|\tilde{z}_{\al}(\al)|^{4}}\da \tilde{z}(\al) \cdot \partial_{\al}^{2} \tilde{z}(\al)\right]d\beta,
\end{align*}

and bound $R_3$ by the kernel (which has order 0) in $L^{\infty}$ norm and $\omega$ in $L^{2}$ norm. This completes the proof.

\end{proof}

Then, the following corollary is immediate
\begin{corollary}
\begin{eqnarray}\label{nsibr}
\|\tilde{c}_{\al}\|_{H^k}\leq
C(\|\F(\tilde{z})\|^2_{L^\infty}+\|\tilde{z}\|^2_{H^{k+2}}+\|\tilde{\omega}\|^2_{H^{k}})^j,
\end{eqnarray}
for $k\geq 2$, where $C$ and $j$ are constants independent of $\tilde{z}$
and $\tilde{\omega}$.
\end{corollary}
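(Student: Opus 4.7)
The plan is to differentiate the formula for $\tilde{c}(\al,t)$ and then read off the corollary from the two lemmas just proved for $BR(\tilde{z},\tilde{\omega})$ and for $\da BR(\tilde{z},\tilde{\omega})\cdot \da \tilde{z}$. Writing
\[
\tilde{c}_\al(\al,t)=\frac{1}{2\pi}\int_{-\pi}^{\pi}\bigl(Q^{2}BR(\tilde{z},\tilde{\omega})\bigr)_{\beta}(\beta,t)\cdot\frac{\tilde{z}_{\beta}(\beta,t)}{|\tilde{z}_{\beta}(\beta,t)|^{2}}\,d\beta\;-\;\bigl(Q^{2}BR(\tilde{z},\tilde{\omega})\bigr)_\al(\al,t)\cdot\frac{\tilde{z}_{\al}(\al,t)}{|\tilde{z}_{\al}(\al,t)|^{2}},
\]
the first term is constant in $\al$, so its $H^k(\mathbb{T})$ norm reduces to an $L^\infty$ bound which follows (after a Cauchy–Schwarz and the estimates on $BR$ and on $Q^2$) from the RHS of the claim. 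Therefore the work reduces to estimating the second term.

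For the second term I would use the constancy $|\tilde{z}_\al|=A(t)$ to pull out $|\tilde{z}_\al|^{-2}$ and then expand by the product rule:
\[
\bigl(Q^{2}BR\bigr)_\al\cdot \tilde{z}_\al \;=\; (Q^{2})_\al\,BR\cdot \tilde{z}_\al \;+\; Q^{2}\,BR_\al\cdot \tilde{z}_\al.
\]
The piece $(Q^{2})_\al BR\cdot \tilde{z}_\al$ is harmless: $Q$ is a smooth function of $\tilde{z}$, so $(Q^{2})_\al$ lies in $H^{k+1}$ once $\tilde{z}\in H^{k+2}$; $BR\in H^k$ by the first lemma of the appendix; $\tilde{z}_\al\in H^{k+1}$; and $H^k$ is a Banach algebra for $k\geq 2$ on $\mathbb{T}$, so this term is controlled in $H^k$ by the desired power of the RHS. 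For $Q^{2}\,BR_\al\cdot \tilde{z}_\al$, the naive estimate only gives $BR_\al\in H^{k-1}$, which would be a half-derivative short; however, the second lemma of the appendix provides exactly the tangential-contraction gain
\[
\|\da BR(\tilde{z},\tilde{\omega})\cdot \da \tilde{z}\|_{H^k}\leq C\bigl(\|\F(\tilde{z})\|^2_{L^\infty}+\|\tilde{z}\|^2_{H^{k+2}}+\|\tilde{\omega}\|^2_{H^{k}}\bigr)^j,
\]
which absorbs this term after multiplying by $Q^2\in H^{k+2}$ using the algebra property.

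The main obstacle is precisely the would-be loss of a derivative in $BR_\al\cdot \tilde{z}_\al$, and it is overcome by quoting the appendix lemma whose proof exploits the cancellations in $R_1$ and $R_3$ (integration by parts against $\partial_\beta$ on the kernel after subtracting the Hilbert-transform part). No new estimate is required beyond the two lemmas; putting the three contributions together and dividing by $|\tilde{z}_\al|^2=A(t)^2$, which by the arc–chord hypothesis is controlled from below in terms of $\|\F(\tilde{z})\|_{L^\infty}$, yields the stated $H^k$ bound for $\tilde{c}_\al$.
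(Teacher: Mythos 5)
Your proof is correct and fills in exactly the argument the paper leaves implicit when it writes that the corollary is ``immediate'' from the two preceding lemmas: you differentiate the explicit formula for $\tilde c$, observe that the $\alpha$-average term is a constant controlled by an $L^\infty$/Cauchy--Schwarz bound, and handle the pointwise term $(Q^2 BR)_\al\cdot \tilde z_\al/|\tilde z_\al|^2$ via the product rule, with the derivative-loss in $BR_\al$ absorbed by the second lemma on $\da BR\cdot\da\tilde z$ and the lower bound on $|\tilde z_\al|$ coming from $\|\F(\tilde z)\|_{L^\infty}$. This is the same route the paper intends, just made explicit.
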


\subsection*{{\bf Acknowledgements}}

\smallskip

 AC, DC, FG and JGS were partially supported by the grant {\sc MTM2011-26696} of the MCINN (Spain) and
the grant StG-203138CDSIF  of the ERC. FG acknowledges support from the Ram\'on y Cajal program. CF was partially supported by
NSF grant DMS-0901040. We are grateful for the support of the Fundaci\'on General del CSIC.


\begin{tabular}{ll}
\textbf{Angel Castro} &  \\
{\small D\'epartement de Math\'ematiques et Applications} & \\
{\small \'Ecole Normale Sup\'erieure} &\\
{\small 45, Rue d'Ulm, 75005 Paris} & \\
{\small Email: castro@dma.ens.fr} & \\
   & \\
\textbf{Diego C\'ordoba} &  \textbf{Charles Fefferman}\\
{\small Instituto de Ciencias Matem\'aticas} & {\small Department of Mathematics}\\
{\small Consejo Superior de Investigaciones Cient\'ificas} & {\small Princeton University}\\
{\small C/ Nicol\'{a}s Cabrera, 13-15} & {\small 1102 Fine Hall, Washington Rd, }\\
{\small Campus Cantoblanco UAM, 28049 Madrid} & {\small Princeton, NJ 08544, USA}\\
{\small Email: dcg@icmat.es} & {\small Email: cf@math.princeton.edu}\\
 & \\
\textbf{Francisco Gancedo} &  \textbf{Javier G\'omez-Serrano}\\
{\small Departamento de An\'alisis Matem\'atico} & {\small Instituto de Ciencias Matem\'aticas}\\
{\small Universidad de Sevilla} & {\small Consejo Superior de Investigaciones Cient\'ificas}\\
{\small C/ Tarfia, s/n } & {\small C/ Nicol\'{a}s Cabrera, 13-15} \\
{\small Campus Reina Mercedes, 41012 Sevilla}  & {\small Campus Cantoblanco UAM, 28049 Madrid} \\
{\small Email: fgancedo@us.es} & {\small Email: javier.gomez@icmat.es}\\
\end{tabular}

\end{document}